\newtheorem{thm}{Theorem}[section]
\newtheorem{prop}[thm]{Proposition}
\newtheorem{lem}[thm]{Lemma}
\newtheorem*{Mthm}{Main Theorem}
\theoremstyle{remark}
\newtheorem{rem}[thm]{Remark}
  \def\Cc{\mathcal{C}}
\def\C{\mathbb{C}}   
\def\Gc{\mathcal{G}} 
\def\N{\mathbb{N}}
\def\Rc{\mathcal{R}}
\renewcommand\emptyset{\varnothing}
\def\eps{\varepsilon}
\def\CP{\mathbb{C}P}
\title{Partial holomorphic semiconjugacies between rational functions}
\author{V. Timorin}
\address{
Faculty of Mathematics\\
Higher School of Economics\\
7 Vavilova St. 112312\\
Moscow, Russia}
\email{vtimorin@hse.ru}
\thanks{Partially supported by the Deligne scholarship, RFBR grant 10-01-00540-a
and MPIM research grant}
\begin{document}

\begin{abstract}
We establish a general result on the existence of partially defined semiconjugacies
between rational functions acting on the Riemann sphere.
The semiconjugacies are defined on the complements to at most one-dimensional sets.
They are holomorphic in a certain sense.
\end{abstract}

\maketitle

\section{Introduction}
Let $A\subseteq\CP^1$ be a $G_\delta$-set, i.e. the intersection of countably many open sets.
A map $\Phi:A\to\CP^1$ is said to be {\em holomorphic} if
there is a sequence of holomorphic maps $\Phi_n:A_n\to\CP^1$ such that
$A_n\supseteq A$ are open subsets of $\CP^1$, and $\Phi_n$
converge to $\Phi$ uniformly on $A$.
Recall that a {\em real semi-algebraic subset} in a real algebraic variety
is a set given by any boolean combination of real algebraic equations and inequalities.
The main result of this paper is the following

\begin{Mthm}
Suppose that $R:\CP^1\to\CP^1$ is a hyperbolic rational function with a finite postcritical set $P_R$,
and $Q:\CP^1\to\CP^1$ is a rational function such that the diagram
$$
 \begin{CD}
   \CP^1 @>R>> \CP^1\\
   @V \tilde\eta VV @VV\eta V\\
   \CP^1 @>Q>> \CP^1
  \end{CD}
$$
is commutative, where $\eta$ and $\tilde\eta$ are homeomorphisms that coincide on $R(P_R)$
and are isotopic relative to $R(P_R)$.
If $P_R$ has at least three points,
then there exists a countable union $Z$ of real semi-algebraic sets of
codimension $>0$ backward invariant under
$Q$ and a holomorphic map $\Phi:\CP^1-Z\to\CP^1$ such that $R\circ\Phi=\Phi\circ Q$ on $\CP^1-Z$.
\end{Mthm}

Recall that the postcritical set of a rational function $R:\CP^1\to\CP^1$ is defined
as the closure of the set $\{R^{\circ n}(c)\}$, where $c$ runs through
all critical points of $R$, and $n$ runs through all positive integers.
A rational function $R$ is called {\em critically finite} if its postcritical
set is finite.
The relation between $R$ and $Q$ resembles Thurston equivalence
(in which we require that $\eta$ and $\tilde\eta$ coincide on $P_R$,
map $P_R$ onto the postcritical set of $Q$, and be isotopic relative to $P_R$)
but is in fact much weaker.
If $Q$ has at least one superattracting cycle of period $>1$, then, as a rule, there are
infinitely many different functions $R$ that satisfy the assumptions of the theorem.
Note that the assumptions of the Main Theorem imply the existence of at
least one super-attracting cycle of $Q$, namely, $\eta(C)$, where $C$
is a super-attracting cycle of $R$ in $R(P_R)$.

The map $\Phi$ from the Main Theorem semiconjugates the restriction of $Q$ to $\CP^1-Z$
with a certain restriction of $R$.
Note that the set $\CP^1-Z$ is forward invariant under $Q$ since $Z$ is backward invariant.
The set $Z$ can be constructed explicitly, and in many different ways.
The Main Theorem is only useful in combination with the knowledge of what $Z$ is.
In fact, $Z$ is very flexible and can be tailored to specific needs.
Semi-algebraicity is only one possible application of this flexibility.
We could have replaced semi-algebraicity with many other nice properties.
The map $\Phi$ is holomorphic, in particular, continuous.
Being holomorphic gives more information than continuity although many nice properties
of holomorphic maps fail in our setting, e.g. the uniqueness theorem.
Note, however, that the restriction of $\Phi$ to the interior of
$\CP^1-Z$ is holomorphic in the usual sense.

The main theorem is closely related to the {\em regluing surgery} of \cite{T},
although we will not use it explicitly.
Many of the ideas used in this paper are inspired by works of M. Rees (see
e.g. \cite{R}).
In Section \ref{s:supp}, we briefly describe some particular applications of the Main Theorem,
from which these relations may become clear.
To prove the Main Theorem, we will use a version of Thurston's algorithm \cite{DH93}.

A bigger part of this work has been done during my visit
at Max Planck Institute for Mathematics, Bonn (January--April 2010).
I would like to thank the institute for providing inspiring working conditions.

\section{Supportive real semi-algebraic sets}
\label{s:supp}

Recall that the {\em support} of a homeomorphism $\sigma:\CP^1\to\CP^1$
is defined as the closure of the set of all points $x\in\CP^1$
such that $\sigma(x)\ne x$.
Let $P$ be a finite subset of $\CP^1$ and $\sigma:\CP^1\to\CP^1$ a homeomorphism.
We say that a closed subset $Z_0\subset\CP^1$ is {\em supportive}
for $(\sigma,P)$ if, for every open neighborhood $U$ of $Z_0$, there
exists a homeomorphism $\tilde\sigma$ with the following properties:
\begin{itemize}
  \item the homeomorphisms $\sigma$ and $\tilde\sigma$ coincide on $P$;
  \item they are isotopic relative to $P$;
  \item the support of $\tilde\sigma$ is contained in $U$.
\end{itemize}

\begin{prop}
  \label{P:supp}
  For every orientation-preserving homeomorphism $\sigma:\CP^1\to\CP^1$ and every finite set
  $P\subset\CP^1$, there exists a closed real semi-algebraic set of
  positive codimension supportive for $(\sigma,P)$.
\end{prop}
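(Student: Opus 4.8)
The plan is to take $Z_0$ to be a connected one-dimensional real semi-algebraic complex $\Gamma$ containing $P$; such a $\Gamma$ is automatically closed (since $\CP^1$ is compact) and of positive codimension. The conceptual point is that supportiveness of a closed set for $(\sigma,P)$ depends only on the isotopy class of $\sigma$ relative to $P$: if $\sigma'$ is isotopic to $\sigma$ rel $P$, then any homeomorphism $\tilde\sigma$ witnessing supportiveness of $Z_0$ for $(\sigma,P)$ also witnesses it for $(\sigma',P)$, because $\sigma'$ and $\tilde\sigma$ both agree with $\sigma$ on $P$ and the two isotopies rel $P$ concatenate. Hence it is enough to exhibit, for every open neighbourhood $U$ of $\Gamma$, a homeomorphism that represents the same mapping class as $\sigma$ relative to $P$ and has support inside $U$; the natural way to produce it is to write $\sigma$ as a composition of homeomorphisms each supported near a single arc of $\Gamma$.

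First I would split off the permutation data of $\sigma$ on $P$. Choose a fixed orientation-preserving homeomorphism $\tau$ with $\tau|_P=\sigma|_P$ that is supported in a small neighbourhood of a semi-algebraic graph $\Gamma_0\supseteq P$. Such a $\tau$ exists because $\mathrm{Homeo}^+(\CP^1)$ acts transitively on ordered tuples of distinct points, and a homeomorphism carrying one such tuple to another can be assembled from finitely many homeomorphisms, each dragging a single point along a semi-algebraic arc and supported in an arbitrarily thin tube around that arc (a collision of incoming and outgoing points is dealt with by performing the drags in stages). Then $\tau^{-1}\circ\sigma$ fixes $P$ pointwise, so it defines an element of the pure mapping class group of $\CP^1\setminus P$. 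By the Birman exact sequences and induction on the cardinality of $P$ (the cases $\#P\le 3$ being trivial), this group is generated by finitely many point-pushing maps along loops which, once an affine chart is fixed, may be taken piecewise linear; moreover each such generator is represented by a homeomorphism supported in an arbitrarily thin tube around its defining loop, a tube meeting $P$ only at the base point. Let $\Gamma_1$ be the union of these loops.

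Now set $\Gamma:=\Gamma_0\cup\Gamma_1$, enlarged by finitely many semi-algebraic arcs so as to be connected and to contain all of $P$; this is again a one-dimensional semi-algebraic set. Given an open $U\supseteq\Gamma$, express the class of $\tau^{-1}\circ\sigma$ as a word in the chosen generators and realize $\tau$ together with each generator occurring in this word by homeomorphisms with support inside $U$ --- possible because each of the relevant arcs lies in $\Gamma\subset U$ and a uniform tube around the compact set $\Gamma$ fits inside $U$. Let $\tilde\sigma$ be $\tau$ post-composed with the resulting product. Then $\tilde\sigma$ agrees with $\sigma$ on $P$ (the point-pushes fix $P$ and $\tau|_P=\sigma|_P$); $\tilde\sigma$ represents the same relative mapping class as $\sigma$, hence is isotopic to $\sigma$ rel $P$; and since the support of a composition is contained in the union of the supports, the support of $\tilde\sigma$ lies in $U$. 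Therefore $\Gamma$ is supportive for $(\sigma,P)$, which proves the proposition.

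The main obstacle is the content of the second paragraph: producing a finite generating set of the pure mapping class group of $\CP^1\setminus P$ whose members admit representatives supported near a prescribed finite semi-algebraic graph and shrinkable to arbitrarily small support, and likewise realizing the homeomorphism $\tau$. The mapping-class-group input is classical; the real work is in keeping every curve and tube semi-algebraic (a piecewise linear model suffices) and in arranging that the finger moves and point-pushes avoid the other marked points and can be squeezed onto their core arcs. The remaining ingredients --- transitivity of $\mathrm{Homeo}^+$ on configurations of distinct points and the behaviour of support under composition --- are routine.
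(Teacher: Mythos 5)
Your proof is correct, but it takes a genuinely different route from the paper's. The paper's argument is a short flow-based trick that avoids mapping class group theory entirely: pick an isotopy $\sigma_t$ from $\sigma$ to the identity, let $Z_0$ be the union of the (semi-algebraically approximated, smoothed) traces $\beta_x(t)=\sigma_t(x)$ of the points $x\in P$, build a time-dependent vector field $v_t$ supported in a thin neighborhood of these traces whose flow $g^t$ carries $\sigma(x)$ along $\beta_x$, and observe that $\tilde\sigma_t=(g^t)^{-1}\circ\sigma_t$ is an isotopy that fixes $P$ pointwise and ends at $\tilde\sigma_1=(g^1)^{-1}$, which is supported inside any prescribed neighborhood $U$ of $Z_0$. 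In one stroke this shows that the only obstruction to compressing $\sigma$ to small support rel $P$ is the motion of $P$ under some isotopy to the identity, and that this motion can be undone by a flow localized near its own trace. Your decomposition --- a finger-move $\tau$ realizing $\sigma|_P$ followed by a word in point-pushing generators of $\mathrm{PMod}(\CP^1\setminus P)$ obtained from the Birman exact sequence --- reaches the same conclusion, but it needs the pure mapping class group machinery and, as you note, some care to keep the generating loops and finger-arcs semi-algebraic, pairwise compatible with $P$, and shrinkable to arbitrarily thin tubes. What the paper's approach buys is brevity and self-containedness; it also yields Remark~\ref{R:supp} for free (the whole family $s\mapsto (g^{s})^{-1}$ stays supported in $U$) and Proposition~\ref{P:Z0} almost verbatim (points fixed by $\sigma$ have constant traces, so $Z_0$ avoids them automatically). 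What your approach buys is an explicit algebraic picture of $\tilde\sigma$ as a word in standard generators of the relative mapping class group, which makes the structure more transparent at the price of heavier input.
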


\begin{proof}
  Consider a continuous one-parameter family $\sigma_t$ of homeomorphisms
  connecting $\sigma$ with the identity: $\sigma_0=\sigma$, $\sigma_1=id$.
  For every point $x\in P$ define a continuous path $\beta_x:[0,1]\to\CP^1$
  by the formula $\beta_x(t)=\sigma_t(x)$.
  We can assume that the curves $\beta_x[0,1]$ are real semi-algebraic
  (e.g. by the Weierstrass approximation theorem) and
  the paths $\beta_x$ are smooth.

  Take any open neighborhood $U$ of the closed real semi-algebraic set
  $$
  Z_0=\bigcup_{x\in P}\beta_x[0,1].
  $$
  Define a vector field $v_t$ on $\CP^1$ depending smoothly on $t$
  and having the following properties:
  \begin{itemize}
    \item at the point $\beta_x(t)$, the vector field $v_t$ is equal to $d\beta_x(t)/dt$;
    \item $v_t=0$ outside of a small neighborhood of $\beta_x(t)$
    contained in $U$.
  \end{itemize}
  Let now $g^t$ be the time $[0,t]$ flow of the non-autonomous
  differential equation $\dot z(t)=v_t$.
  Clearly, $g^t(\sigma(x))=\beta_x(t)$ for every $x\in P$, and the support of $g^t$ is contained in $U$.

  Now define the following isotopy:
  $\tilde\sigma_t=(g^t)^{-1}\circ\sigma_{t}$, $t\in [0,1]$.
  We have $\tilde\sigma_0(x)=\sigma(x)$.
  On the other hand, the support of $\tilde\sigma_1=(g_1)^{-1}$ is contained in $U$.
  Therefore, $Z_0$ is supportive for $(\sigma,P)$.
\end{proof}

\begin{rem}
\label{R:supp}
  It also follows from the proof of Proposition \ref{R:supp} that
  there exists a continuous one-parameter family of homeomorphisms
  connecting $\tilde\sigma_1$ with the identity such that the supports of
  all these homeomorphisms are contained in $U$.
  To obtain such a family, we can apply the procedure, described in the
  proof of Proposition \ref{R:supp}, to homeomorphisms $\sigma_t$ rather than $\sigma$.
\end{rem}

\begin{prop}
\label{P:Z0}
  Let rational functions $Q$ and $R$ be as in the statement of the Main Theorem.
  Set $\sigma=\tilde\eta\circ\eta^{-1}$, so that $\sigma\circ Q=\tilde\eta\circ R\circ\tilde\eta^{-1}$.
  There exists a closed real semi-algebraic set $Z_0$ of positive codimension
  that is supportive for $(\sigma,\eta(P_R))$ and that is disjoint from $\eta(R(P_R))$.
\end{prop}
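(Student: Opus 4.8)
The plan is to follow the proof of Proposition~\ref{P:supp} applied to the pair $(\sigma,\eta(P_R))$, but with an isotopy from $\sigma$ to the identity chosen so that the points of $\eta(R(P_R))$ stay fixed, and then to delete from the candidate set the (now constant) trajectories of those points. Write $F=\eta(R(P_R))$; recall $R(P_R)\subseteq P_R$, so $F\subseteq\eta(P_R)$, and since $\eta$ and $\tilde\eta$ coincide on $R(P_R)$ the homeomorphism $\sigma=\tilde\eta\circ\eta^{-1}$ fixes $F$ pointwise. By hypothesis there is an isotopy $\eta_s$, $s\in[0,1]$, with $\eta_0=\eta$, $\eta_1=\tilde\eta$, and $\eta_s=\eta$ on $R(P_R)$ for all $s$. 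Put $\sigma^{(0)}_t=\eta_{1-t}\circ\eta^{-1}$; then $\sigma^{(0)}_0=\sigma$, $\sigma^{(0)}_1=\mathrm{id}$, and $\sigma^{(0)}_t$ fixes $F$ pointwise for every $t$. Hence the trajectory $t\mapsto\sigma^{(0)}_t(x)$ is constant (equal to $x$) for $x\in F$, while for $x\in\eta(P_R)\setminus F$ it joins $\sigma(x)$ to $x$, with both endpoints in $\CP^1\setminus F$ (as $\sigma$ fixes $F$ pointwise, it preserves $\CP^1\setminus F$, so $\sigma(x)\notin F$).

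Next I would replace $\sigma^{(0)}_t$ by an isotopy $\sigma_t$ still running from $\sigma$ to $\mathrm{id}$ and still fixing $F$ pointwise, but whose marked-point trajectories $\beta_x(t):=\sigma_t(x)$ for $x\in\eta(P_R)\setminus F$ are smooth, real semi-algebraic, and contained in $\CP^1\setminus F$ (and $\beta_x\equiv x$ for $x\in F$). This is routine: $F$ is finite and the trajectories of $\sigma^{(0)}_t$ already have endpoints off $F$, so one approximates them by semi-algebraic paths with the same endpoints avoiding $F$ (Weierstrass approximation, as in Proposition~\ref{P:supp}), then transports $\sigma^{(0)}_t$ by an ambient isotopy $f_t$ with $f_0=f_1=\mathrm{id}$, supported in $\CP^1\setminus F$, that carries the old configuration of marked points to the new one, and sets $\sigma_t=f_t\circ\sigma^{(0)}_t$.

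Now set $$Z_0=\bigcup_{x\in\eta(P_R)\setminus F}\beta_x[0,1],$$ a closed real semi-algebraic set of positive codimension that is disjoint from $F=\eta(R(P_R))$. Given any open neighborhood $U$ of $Z_0$, I would run the vector-field/flow construction from the proof of Proposition~\ref{P:supp}, taking the nonautonomous vector field $v_t$ to be supported in a neighborhood $N$ of $Z_0$ with $\overline N\subseteq U$ and $\overline N\cap F=\emptyset$ — possible because $Z_0$ is compact and disjoint from the finite set $F$ — so that its time-$[0,t]$ flow $g^t$ satisfies $g^t(\sigma(x))=\beta_x(t)$ for $x\in\eta(P_R)\setminus F$. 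Since $g^t$ is supported off $F$, one also has $g^t(\sigma(x))=g^t(x)=x=\beta_x(t)$ for $x\in F$; therefore $\tilde\sigma_t=(g^t)^{-1}\circ\sigma_t$ is an isotopy relative to $\eta(P_R)$ from $\sigma$ to $\tilde\sigma_1=(g^1)^{-1}$, and the support of $\tilde\sigma_1$ lies in $N\subseteq U$. Thus $Z_0$ is supportive for $(\sigma,\eta(P_R))$, as required.

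The point that needs care — the only genuinely new ingredient beyond Proposition~\ref{P:supp} — is the interplay between supportiveness and the disjointness requirement: supportiveness is a condition on all of $\eta(P_R)$, whereas $Z_0$ must avoid the subset $\eta(R(P_R))$ of points that $\sigma$ fixes. The resolution exploits the relative-isotopy hypothesis in two ways: it makes the trajectories of the fixed marked points literally constant, so they may be omitted from $Z_0$, and it guarantees that the support-reducing flow can be localized away from $\eta(R(P_R))$, so that omitting those trajectories does not spoil the flow argument.
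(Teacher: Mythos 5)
Your proposal is correct and follows essentially the same route as the paper: the paper's proof likewise takes the isotopy $\sigma_t$ to fix $\eta(R(P_R))$ pointwise (using that $\sigma=\mathrm{id}$ there and is homotopic to the identity rel that set), omits the now-constant curves $\beta_x$ for $x\in\eta(R(P_R))$ from $Z_0$, and reruns the flow construction of Proposition \ref{P:supp}. You simply spell out the details (semi-algebraic approximation avoiding the finite fixed set, localizing the flow away from it) that the paper leaves implicit.
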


\begin{proof}
  Note that $\sigma=id$ on $\eta(R(P_R))$, and $\sigma$ is homotopic to the
  identity relative to the set $\eta(R(P_R))$, by the assumptions of the Main Theorem.
  In the proof of Proposition \ref{P:supp}, we can therefore assume that
  $\sigma_t(x)=x$ for all $x\in\eta(R(P_R))$ and all $t\in [0,1]$.
  For these $x$, we do not consider the curves $\beta_x$.
  The rest of the proof works as before.
\end{proof}

The main theorem will follow from

\begin{thm}
\label{T:holmodZ}
Suppose that $Q$ and $R$ are rational functions as in the statement of the
Main Theorem, and $Z_0$ is the set from Proposition \ref{P:Z0}.
Define the set
$$
Z=\bigcup_{n=1}^\infty Q^{-n}(Z_0).
$$
There exists a holomorphic map $\Phi:\CP^1-Z\to\CP^1$ such that
$R\circ\Phi=\Phi\circ Q$ on $\CP^1-Z$.
\end{thm}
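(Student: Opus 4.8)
The plan is to adapt Thurston's algorithm, obtaining $\Phi$ as the uniform limit of a sequence of maps $\Phi_n$ built by successive lifting through $R$. Put $\Phi_0=\eta^{-1}$ and $\Phi_1=\tilde\eta^{-1}$; since $\eta\circ R=Q\circ\tilde\eta$ one has $R\circ\tilde\eta^{-1}=\eta^{-1}\circ Q$, i.e. $R\circ\Phi_1=\Phi_0\circ Q$. Assume inductively that $\Phi_n$ has been constructed, is a homeomorphism onto its image off a finite subset of $Z$, and satisfies
$$
\Phi_n\circ Q=w_n\circ R\circ\Phi_n
$$
for a homeomorphism $w_n$ of $\CP^1$ that is the identity on $R(P_R)$ and isotopic to the identity relative to $R(P_R)$; for $n=1$ this holds with $w_1=\tilde\eta^{-1}\circ\eta$, which fixes $R(P_R)$ (where $\eta=\tilde\eta$) and is isotopic to $\tilde\eta^{-1}\circ\tilde\eta=\mathrm{id}$ rel $R(P_R)$ since $\eta$ and $\tilde\eta$ are. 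Lift $w_n$ through the branched covering $R$ to a homeomorphism $\hat w_n$, defined and continuous off finitely many points, with $R\circ\hat w_n=w_n\circ R$ and $\hat w_n$ isotopic to the identity, and set $\Phi_{n+1}=\hat w_n\circ\Phi_n$. Then $R\circ\Phi_{n+1}=R\circ\hat w_n\circ\Phi_n=w_n\circ R\circ\Phi_n=\Phi_n\circ Q$; substituting $R\circ\Phi_n=w_n^{-1}\circ R\circ\Phi_{n+1}$ into $\Phi_{n+1}\circ Q=\hat w_n\circ w_n\circ R\circ\Phi_n$ shows $\Phi_{n+1}\circ Q=\hat w_n\circ R\circ\Phi_{n+1}$, so one may take $w_{n+1}=\hat w_n$. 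In particular $w_{n+1}$ is a lift of $w_1$ through $R^n$ ($R^n\circ w_{n+1}=w_1\circ R^n$), so $\{\,w_{n+1}\ne\mathrm{id}\,\}\subset(R^n)^{-1}(\{\,w_1\ne\mathrm{id}\,\})$.

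The lift $\hat w_n$ fails to be a homeomorphism only over the critical values $v$ of $R$ with $w_n(v)\ne v$: there the ramified fibre $R^{-1}(v)$ cannot be matched continuously with the unramified fibre over $w_n(v)$, and $\Phi_{n+1}$ becomes discontinuous at the points of $\Phi_n^{-1}(R^{-1}(v))$. Now $w_n(v)\ne v$ forces $R^{n-1}(v)\in\{\,w_1\ne\mathrm{id}\,\}=\{\,z:\eta(z)\ne\tilde\eta(z)\,\}$, a set disjoint from $R(P_R)$; and $v$, being a critical value of $R$, lies in $P_R$, whence $R^{n-1}(v)\in P_R-R(P_R)$. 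Using $R^n\circ\Phi_n=\Phi_0\circ Q^n$, the exceptional points of $\Phi_{n+1}$ are mapped by $Q^n$ to $\eta(R^{n-1}(v))\in\eta(P_R-R(P_R))$, and $\eta(P_R-R(P_R))\subset Z_0$, since in the construction of Proposition \ref{P:supp}, refined as in Proposition \ref{P:Z0}, every point of $\eta(P_R)-\eta(R(P_R))$ is an endpoint of an arc lying in $Z_0$. Hence the exceptional points lie in $Q^{-n}(Z_0)\subset Z$; so each $\Phi_n$ is defined and continuous on $\CP^1-Z$, and the exceptional loci accumulate precisely on $Z$. Making this bookkeeping rigorous — matching the critical-value obstructions to lifting with the prescribed $Z$, and checking that $Z$ remains a countable union of semi-algebraic sets of positive codimension — is the main obstacle of the proof; it is exactly because $\eta$ and $\tilde\eta$ are assumed to agree only on $R(P_R)$, not on the full critical-value set of $R$, that $Z$ must be removed.

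Convergence uses hyperbolicity and the hypothesis $|P_R|\ge3$. These guarantee, for $R$ critically finite, that $R$ is expanding: there is a suitable conformal metric with respect to which all inverse branches of $R$ contract by a fixed factor $\lambda^{-1}<1$, away from the superattracting cycles of $R$, where one works instead in Böttcher coordinates — which there give a canonical holomorphic semiconjugacy because the local degrees of $R$ and $Q$ along the corresponding cycles agree. Since $\hat w_n$ is a lift of the fixed homeomorphism $w_1$ through $R^n$, its displacement is $O(\lambda^{-n})$, so $\operatorname{dist}(\Phi_{n+1},\Phi_n)\le C\lambda^{-n}$ uniformly on $\CP^1-Z$; hence $(\Phi_n)$ is uniformly Cauchy there and converges to a continuous map $\Phi\colon\CP^1-Z\to\CP^1$. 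Letting $n\to\infty$ in $R\circ\Phi_{n+1}=\Phi_n\circ Q$ gives $R\circ\Phi=\Phi\circ Q$ on $\CP^1-Z$.

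Finally, $\Phi$ is holomorphic. On the part of $\CP^1-Z$ inside the Fatou set of $Q$ this is built in, since there $\Phi$ is assembled from the linearizing and Böttcher coordinates of the (super)attracting cycles, which are holomorphic. On the rest, the relation $R^n\circ\Phi=\Phi\circ Q^n$ together with the uniform contraction of the inverse branches of $R^n$ presents $\Phi$, locally uniformly, as a limit of maps holomorphic off neighborhoods of $Z$ that shrink with $n$, the non-conformality inherited from $\eta$ being driven by the backward $Q$-dynamics into the measure-zero set $Z$. Equivalently, after replacing $\eta$, $\tilde\eta$ by quasiconformal representatives of their isotopy classes (legitimate since $\deg Q=\deg R$), the complex dilatation of $\Phi$ is $Q$-invariant off $Z$ and carried by $Z$; and since the hypotheses of the Main Theorem force $Q$ to have a superattracting cycle, $Q$ is not a flexible Lattès map and carries no nontrivial invariant line field, so this dilatation vanishes. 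In either form $\Phi$ is holomorphic in the sense of the Introduction, which proves Theorem \ref{T:holmodZ}, and, since $Z$ is then a countable union of semi-algebraic sets of positive codimension with $Q^{-1}(Z)\subset Z$, also the Main Theorem.
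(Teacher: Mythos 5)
Your overall strategy (iterated pullback plus an expanding metric) is in the spirit of the paper, but the core construction has a genuine gap. The lift $\hat w_n$ with $R\circ\hat w_n=w_n\circ R$ does not exist as ``a homeomorphism defined and continuous off finitely many points'' once $w_n$ moves a critical value $v$ of $R$: near a critical point $c\in R^{-1}(v)$ of local degree $k>1$, any continuous solution must be a branch of $R^{-1}\circ w_n\circ R$ over a neighborhood of the regular value $w_n(v)$, hence is $k$-to-one on a whole neighborhood of $c$; if instead you insist on injectivity, you are forced to introduce discontinuities along arcs emanating from $\Phi_n^{-1}(R^{-1}(v))$, not just at isolated points. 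So the exceptional locus of each $\Phi_{n+1}$ is one-dimensional, and the real work is to choose these cuts so that they project under $Q^n$ into $Z_0$ and so that the resulting maps still compose coherently. That is precisely what the paper's notion of a supportive set and the replacement of $Q$ by $\tilde\sigma\circ Q$ (with $\tilde\sigma$ supported in a shrinking neighborhood $U$ of $Z_0$) accomplish: one runs Thurston's algorithm for the honest branched covering $\tilde\sigma\circ Q$, obtaining genuine homeomorphisms $\hat\phi_n$ and varying rational maps $R_n\to R$, and then shows (Proposition \ref{P:indep}) that $\hat\phi_n$ is independent of $\tilde\sigma$ off $U_n=\bigcup_{i\le n}Q^{-i}(U)$, so that shrinking $U$ defines $\Phi_n$ on $\CP^1-Z_n$. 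Your proposal defers exactly this step as ``bookkeeping,'' but it is the main construction, not an afterthought; keeping $R$ fixed and lifting through it directly cannot work as stated.

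Two further points. First, holomorphy: your approximants are compositions of $\tilde\eta^{-1}$ with topological lifts and are therefore merely continuous, so the limit is not exhibited as a uniform limit of holomorphic maps on open supersets of $\CP^1-Z$, which is what the paper's definition of ``holomorphic'' requires; the invariant-line-field argument you substitute does not apply, since $\Phi$ is not known to be quasiconformal (and need not be injective or defined a.e.\ in any useful sense). In the paper each $\Phi_n$ \emph{is} holomorphic on the open set $\CP^1-Z_n$, because away from the support of $\tilde\sigma$ and its pullbacks it is locally a branch of $R_{n-1}^{-1}\circ\Phi_{n-1}\circ Q$. Second, convergence: the assertion that the displacement of $\hat w_n$ is $O(\lambda^{-n})$ uniformly is exactly the delicate point, because the expanding metric degenerates like $|d\xi|/|\xi|$ at $P_R$ and the approximants move the marked points; the paper needs the compactification $X$, the function space $\Cc$, the correcting homeomorphisms $\psi_t$, and Lemmas \ref{L:ext} and \ref{L:not-too-close} to run the contraction argument uniformly up to $P_f$. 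As written, your proof establishes neither the existence of the $\Phi_n$ with the stated properties nor their uniform convergence nor the holomorphy of the limit.
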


Clearly, $Z$ is a countable union of real semi-algebraic sets of positive
codimension, namely, of the iterated preimages of $Z_0$ under $Q$.

We now describe some particular applications of Theorem \ref{T:holmodZ}.
Let $\Rc_2$ be the set of M\"obius conjugacy classes of quadratic rational functions
with marked critical points.
Following M. Rees \cite{R} and J. Milnor \cite{M}, consider the slice $Per_k(0)\subset\Rc_2$
defined by the condition that the second critical point is periodic of period $k$.
The slices $Per_k(0)$ form a natural sequence of parameter curves starting with $Per_1(0)$,
the plane of quadratic polynomials.
We say that a critically finite rational function $R\in Per_k(0)$ is of {\em type C}
if the first critical point is eventually mapped to the second (periodic) critical point
but does not belong to the cycle of the second critical point.
Let $R\in Per_k(0)$ be any type C critically finite rational function, and $Q\in Per_k(0)$
be almost any function.
There are only finitely many exceptions, and all exceptional maps are critically finite.
Then, as M.Rees has shown in \cite{R}, there is a homeomorphism $\sigma_\beta:\CP^1\to\CP^1$,
whose support is contained in an arbitrarily small neighborhood of a simple path $\beta:[0,1]\to\CP^1$
such that $\sigma_\beta(\beta(0))=\beta(1)$, and
$\sigma_\beta\circ Q$ is a critically finite branched covering Thurston equivalent to $R$.
It is a simple exercise to check that $Q$ and $R$ satisfy the assumptions of the Main Theorem,
and that we can take $Z_0=\beta[0,1]$.
In this way, we obtain a partial semiconjugacy between almost any function from $Per_k(0)$
and any type C critically finite function.
It will be defined on the complement to all pullbacks of the simple curve $Z_0$ under $Q$.
E.g. for $Q$ we can take a quadratic polynomial $z\mapsto z^2+c$, whose critical point $0$
is periodic of period $k$.
Then the Main Theorem implies, in particular, the topological models for {\em captures} of $Q$
introduced in \cite{R}.

\section{Thurston's algorithm}
\label{s:Thurston}

In the proof of Theorem \ref{T:holmodZ},
we will use {\em Thurston's algorithm} (see \cite{DH93}).
We now briefly recall how it works (in a slightly more general setting than usual).
Let $X$ be a topological space and $f:X\to X$ be a continuous map.
Suppose that there is a topological semiconjugacy between $f$
and a rational function acting on the Riemann sphere.
Thurston's algorithm serves to find this semi-conjugacy.
It starts with a surjective continuous map $\phi_0:X\to\CP^1$.
Assume that there is a rational function $R_0$ and a continuous map
$\phi_1:X\to\CP^1$ that make the following diagram commutative:
  $$
  \begin{CD}
   X @>f>> X\\
   @V \phi_1 VV @VV\phi_0 V\\
   \CP^1 @>R_0>> \CP^1
  \end{CD}
  $$
This is always the case if the map $\phi_0$ is a homeomorphism
(in particular, $X$ is a topological sphere) and $f$ a branched covering.
Indeed, we can arrange that $f$ and $\phi_0$ be smooth by small deformations
preserving the critical values of $\phi_0\circ f$.
Then we consider the pullback $\kappa$ of the complex structure on $\CP^1$
under the map $\phi_0\circ f$.
We can integrate $\kappa$, i.e. there is a homeomorphism $\phi_1:X\to\CP^1$
taking the complex structure $\kappa$ on $X$
to the standard complex structure on $\CP^1$.
Clearly, $R_0=\phi_0\circ f\circ\phi_1^{-1}$ preserves the standard
complex structure, hence it is a rational function.
If $f$ or $\phi_0$ were not smooth, then $R_0$ constructed for
smooth deformations of $f$ and $\phi_0$ will also work for $f$ and $\phi_0$,
i.e. $\phi_1$ can be defined as a branch of $R_0^{-1}(\phi_0\circ f)$.
Note that $R_0$ is only defined up to precomposition with an automorphism
of $\CP^1$, and $\phi_1$ is only defined up to post-composition with an
automorphism of $\CP^1$.

The transition from $\phi_0$ to $\phi_1$ is the main step of Thurston's algorithm.
Doing this step repeatedly, we obtain a sequence of maps $\phi_n$.
We want that $\phi_n$ converge to a semiconjugacy between $f$ and some rational function.

We will now use notation from Proposition \ref{P:Z0} and Theorem \ref{T:holmodZ}.
Let us consider Thurston's algorithm for $\tilde\sigma\circ Q$,
where $\tilde\sigma$ is a homeomorphism  isotopic to $\sigma$ relative to the set
$\eta(P_R)$.
Note that the branched covering $\tilde\sigma\circ Q$ is Thurston equivalent to $R$,
and $P=\tilde\eta(P_R)$ is the postcritical set of this branched covering.
Indeed, all critical values of $Q$ are contained in $\eta(P_R)$;
the images of these critical values under $\tilde\sigma$ are contained in
$\tilde\eta(P_R)=\tilde\sigma\circ\eta(P_R)$; the further images under $\tilde\sigma\circ Q$
are contained in $P'=\eta(R(P_R))$ because the action of $\tilde\sigma\circ Q$
on $\tilde\eta(P_R)$ coincides with the action of $Q$, and $Q(\tilde\eta(P_R))=P'$.
By Proposition \ref{P:Z0}, we can assume that the support of $\tilde\sigma$
is contained in an arbitrarily small neighborhood $U$ of $Z_0$.
We choose this neighborhood so that it is disjoint from the set $P'$.

Set $\hat f=\tilde\sigma\circ Q$.
Thurston's algorithm for $\hat f$ yields an infinite commutative diagram
$$
  \begin{CD}
   @> >> \CP^1 @>\hat f>> \CP^1 @>\hat f>>\CP^1 @>\hat f>> \CP^1\\
   \dots @. @V\hat\phi_3 VV @V\hat\phi_2 VV @V\hat\phi_1 VV @VV\hat\phi_0 V\\
   @> >> \CP^1 @>R_2>> \CP^1 @>R_1>>\CP^1 @>R_0>> \CP^1
  \end{CD}
  $$
We can set $\hat\phi_0=id$.
The classes of $\hat\phi_n$ in the Teichm\"uller space of $(\CP^1,P)$ are well defined.
They depend only on the Thurston equivalence class of $\hat f$ and not on a particular
choice of the homeomorphism $\tilde\sigma$.
However, the maps $\hat\phi_n$ are only defined up to post-composition with
conformal automorphisms of $\CP^1$.
To make a definite choice of $\hat\phi_n$, we introduce the following normalization.
Let $P_0$ be any 3-point subset of $P$.
Note that the sets $Q^{\circ n}(P_0)$ are disjoint from $U$ for all $n>0$
since they lie in $P'$.
We require that the restriction of every $\hat\phi_n$ to $P_0$ be the identity.
This normalization makes the maps $\hat\phi_n$ uniquely defined.
However, the maps $\hat\phi_n$ depend on the choice of $\tilde\sigma$.
The rational functions $R_n$ are uniquely defined by the classes of
$\hat\phi_n$ in the Teichm\"uller space of $(\CP^1,P)$ and
the normalization $\hat\phi_n|_{P_0}=id$.
Therefore, they do not depend on the choice of $\tilde\sigma$.

\begin{prop}
\label{P:indep}
Set $U_n$ to be the union of $Q^{-i}(U)$ for $i=1$, $\dots$, $n$.
The values of $\hat\phi_n$ at points $z\not\in U_n$ do not depend on a particular
choice of a homeomorphism $\tilde\sigma$ with support in $U$.
\end{prop}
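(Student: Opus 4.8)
The plan is to argue by induction on $n$. For $n=0$ the set $U_0$ is empty and $\hat\phi_0=id$ regardless of $\tilde\sigma$, so there is nothing to prove. Assume the statement for $n-1$, and let $\tilde\sigma$ and $\tilde\sigma'$ be two homeomorphisms with support in $U$, both isotopic to $\sigma$ relative to $\eta(P_R)$; they give rise to $\hat f=\tilde\sigma\circ Q$, $\hat f'=\tilde\sigma'\circ Q$ and to the two Thurston sequences $\hat\phi_n$, $\hat\phi_n'$ with associated rational functions $R_n$, $R_n'$. Recall from the discussion preceding the statement that $R_n=R_n'$ for all $n$, so write $R:=R_{n-1}=R_{n-1}'$.

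Now fix $z\notin U_n=\bigcup_{i=1}^n Q^{-i}(U)$, so $Q^i(z)\notin U$ for $i=1,\dots,n$. Taking $i=1$ gives $Q(z)\notin U$, hence $\tilde\sigma$ and $\tilde\sigma'$ both fix $Q(z)$ and $\hat f(z)=Q(z)=\hat f'(z)$. Taking $i=2,\dots,n$ gives $Q^i(Q(z))=Q^{i+1}(z)\notin U$, so $Q(z)\notin U_{n-1}$; by the inductive hypothesis the value $\hat\phi_{n-1}(Q(z))$ does not depend on the choice of homeomorphism, i.e. $\hat\phi_{n-1}(\hat f(z))=\hat\phi_{n-1}'(\hat f'(z))$. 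Therefore the restrictions $\hat\phi_n|_{\CP^1-U_n}$ and $\hat\phi_n'|_{\CP^1-U_n}$ are two continuous lifts, through the \emph{common} branched covering $R$, of one and the same map $z\mapsto\hat\phi_{n-1}(Q(z))$ defined on $\CP^1-U_n$. These two lifts agree on the three-point set $P_0$: indeed $Q^{\circ i}(P_0)\subset P'$ for $i>0$ and $P'$ is disjoint from $U$, so $P_0\subset\CP^1-U_n$, while the normalization $\hat\phi_n|_{P_0}=\hat\phi_n'|_{P_0}=id$ is imposed identically for both. The set on which $\hat\phi_n$ and $\hat\phi_n'$ agree is closed in $\CP^1-U_n$, and its complement there is open (if $\hat\phi_n(z)\ne\hat\phi_n'(z)$ one separates the two images by disjoint neighborhoods and uses continuity), so the two restrictions coincide on every connected component of $\CP^1-U_n$ that meets $P_0$.

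What remains — and this is the step I expect to require real care — is to treat the connected components of $\CP^1-U_n$ that are disjoint from $P_0$. I see two ways to close this gap. One is to arrange, already when choosing the semi-algebraic set $Z_0$ in Proposition \ref{P:Z0}, that $\CP^1-U_n$ be connected: this can be done by taking the curves $\beta_x$ so that $\bigcup_{i\ge 1}Q^{-i}(Z_0)$ is a forest, whose complement in $\CP^1$ is then connected, in which case the clopen argument above already finishes the proof. The other is a continuity argument: one joins $\tilde\sigma$ to $\tilde\sigma'$ by a path of homeomorphisms supported in $U$ (this needs a separate, purely topological verification, using that they agree on $P$ and are isotopic relative to $\eta(P_R)$), and observes that along this path $\hat\phi_n(z)$, for fixed $z\notin U_n$, depends continuously on the homeomorphism while taking values in the fixed finite set $R^{-1}(\hat\phi_{n-1}(Q(z)))$ — the map being lifted and the map $R$ through which we lift are both unchanged by the inductive hypothesis and by $R_n=R_n'$ — and is therefore constant. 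In either case the essential difficulty is precisely the relevant connectedness statement; the rest is bookkeeping with the already-recorded facts $R_n=R_n'$, $\hat f=Q$ off $Q^{-1}(U)$, and $Q(P)=P'\subset P$ with $P'\cap U=\emptyset$.
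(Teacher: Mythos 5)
Your ``second route'' is precisely the paper's proof, so the essential idea is present; but as written the argument is not closed, and you've also attached to it a framing and an alternative route that are both problematic. The paper's proof is the pointwise continuity argument: join the two admissible homeomorphisms by an isotopy $\tilde\sigma_t$ with all supports in $U$, and observe that for each fixed $z\notin U_n$ the path $t\mapsto\hat\phi_{n,t}(z)$ satisfies $R_{n-1}(\hat\phi_{n,t}(z))=\hat\phi_{n-1,t}(Q(z))$, which by the inductive hypothesis (and since $R_{n-1}$ is independent of the choice) is independent of $t$; so the path lies in the finite set $R_{n-1}^{-1}(\hat\phi_{n-1}(Q(z)))$ and is therefore constant. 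The ``separate, purely topological verification'' you flag is exactly what Remark~\ref{R:supp} supplies: each of $\tilde\sigma$ and $\tilde\sigma'$ can be joined to the identity through homeomorphisms supported in $U$, and concatenating these two paths gives the required isotopy. Once you cite that remark, route two is complete. Note also that this argument works pointwise and needs no case split on connected components of $\CP^1-U_n$, so the preceding lifting-through-$R$/agree-on-$P_0$ framing, though correct, is an unnecessary detour; the continuity argument subsumes both the components meeting $P_0$ and those that don't.

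Your first route — arranging $Z_0$ so that $\bigcup_{i\ge1}Q^{-i}(Z_0)$ is a forest and hence $\CP^1-U_n$ is connected — I would not rely on. Even if $Z_0$ is a finite union of arcs disjoint from the critical orbit of $Q$ so that each $Q^{-i}(Z_0)$ is individually a disjoint union of arcs, arcs coming from different preimage levels can intersect and bound disks; and even when they don't, the open sets $Q^{-i}(U)$ need not be thin neighborhoods of $Q^{-i}(Z_0)$ and can merge nearby strands, disconnecting $\CP^1-U_n$. Nothing in Proposition~\ref{P:Z0} gives you enough control to rule this out, and strengthening that proposition is much harder than simply invoking Remark~\ref{R:supp}. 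So: keep route two, drop route one, and drop the component case split.
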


\begin{proof}
Indeed, different homeomorphisms $\tilde\sigma_0$ and $\tilde\sigma_1$
are isotopic relative to $P$.
Let $\tilde\sigma_t$, $t\in [0,1]$ be an isotopy.
We can assume that the support of $\tilde\sigma_t$ is contained in $U$ for every $t$,
see Remark \ref{P:supp}.
Let $\hat\phi_{n,t}$ be the maps that correspond to $\hat\phi_n$
as we replace $\tilde\sigma$ with $\tilde\sigma_t$.
Take $z\not\in Q^{-n}(U)$.
Suppose by induction that $\hat\phi_{n-1,t}(\hat f(z))$ does not depend on $t$
(note that $\hat f(z)=Q(z)\not\in U_{n-1}$).
Then $\hat\phi_{n,t}(z)$ is a continuous path such that
$R_{n-1}\circ\hat\phi_{n,t}(z)=\hat\phi_{n-1}(z)\circ f$.
Hence the values of this path lie in the finite set $R_{n-1}^{-1}(\hat\phi_{n-1}(z)\circ f)$.
It follows that the path is constant.
\end{proof}

We can include the maps $\hat\phi_n$ into a continuous family of homeomorphisms
$\hat\phi_t:\CP^1\to\CP^1$ defined for all real non-negative values of $t$.
This is done in the following way.
By Remark \ref{R:supp}, there is a continuous one-parameter family of homeomorphisms $\tilde\sigma_t$,
$t\in [0,1]$ connecting $id$ with $\tilde\sigma$ such that
the supports of all $\tilde\sigma_t$ are contained in an arbitrarily small
neighborhood $U$ of $Z_0$, and $\tilde\sigma_t(\eta(P_R-P(P_R)))\subset Z_0$
(we use the notation of Proposition \ref{P:Z0}).
Consider the first step of Thurston's algorithm for $\hat f_t=\tilde\sigma_t\circ Q$:
$$
\begin{CD}
   \CP^1 @>\hat f_t>> \CP^1\\
   @V \hat\phi_t VV @VV id V\\
   \CP^1 @>R_{t-1}>> \CP^1
  \end{CD}
$$
Note that the critical values of $\hat f_t$ (hence also of $R_{t-1}$)
different from critical values of $Q$ lie in $Z_0$.
Normalize $\hat\phi_t$ by requiring that their restrictions to $P_0$ be the identity.
Note that, for $t=1$, we obtain the same $\hat\phi_1$ and $R_0$ as before.
We have $\hat f_1=\hat f$.
We can now start Thurston's algorithm with $\hat\phi_t$, $t\in (0,1)$ rather
than starting it with $\hat\phi_0=id$ (let me stress however that we do
{\em the same} algorithm for all $\hat\phi_t$, namely, the algorithm associated with $\hat f$!).
In this way, we obtain a continuous path of rational functions $R_t$
and a continuous path of homeomorphisms $\hat\phi_t$ defined for all real
nonnegative $t$ and satisfying the identity
$R_t\circ\hat\phi_{t+1}=\hat\phi_t\circ\hat f$ for $t\ge 0$.

\begin{prop}
  \label{P:R_n-conv}
  The rational functions $R_t$ converge to a rational function
  M\"obius conjugate to $R$.
\end{prop}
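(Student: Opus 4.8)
The plan is to recognize the family $R_t$ as the rational functions produced by Thurston's algorithm for $\hat f=\tilde\sigma\circ Q$ and to invoke Thurston's convergence theorem \cite{DH93}. First I would assemble the relevant facts about $\hat f$: it is a critically finite branched covering whose postcritical set is $P=\tilde\eta(P_R)$ (so $|P|=|P_R|\ge 3$), it is Thurston equivalent to $R$, and its orbifold is hyperbolic. The last point is automatic for a hyperbolic critically finite rational map with at least three postcritical points, since every critically finite map with a parabolic orbifold (Lattès or Chebyshev type) has a critical point in its Julia set and is therefore not hyperbolic.

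Next I would work in the Teichm\"uller space $\mathcal T=\mathcal T(\CP^1,P)$ with the Thurston pullback map $\sigma_{\hat f}\colon\mathcal T\to\mathcal T$ associated with $\hat f$. By construction of the algorithm — the step $\hat\phi_t\rightsquigarrow\hat\phi_{t+1}$ is exactly this pullback, and $R_t=\hat\phi_t\circ\hat f\circ\hat\phi_{t+1}^{-1}$ — the classes satisfy $[\hat\phi_{t+1}]=\sigma_{\hat f}([\hat\phi_t])$ for all $t\ge 0$, hence $[\hat\phi_{n+s}]=\sigma_{\hat f}^{\circ n}([\hat\phi_s])$. Moreover, as recalled in Section \ref{s:Thurston}, $R_t$ is a continuous function of the class $[\hat\phi_t]\in\mathcal T$, the normalization $\hat\phi_s|_{P_0}=id$ pinning down the representative homeomorphisms and removing the M\"obius ambiguity of $R_t$. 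Since $\hat f$ is Thurston equivalent to a rational function, it carries no Thurston obstruction, so by Thurston's theorem $\sigma_{\hat f}$ has a fixed point $\tau_*$, unique up to the mapping class group, corresponding to $R$, and $\sigma_{\hat f}^{\circ n}(\tau)\to\tau_*$ for every $\tau\in\mathcal T$; recall also that $\sigma_{\hat f}$ never increases the Teichm\"uller distance.

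These ingredients give the convergence directly, without any quantitative contraction estimate. The set $K=\{[\hat\phi_s]:s\in[0,1]\}$ is compact, being the continuous image of $[0,1]$. Given any $t_k\to\infty$, write $t_k=n_k+s_k$ with $s_k\in[0,1)$ and pass to a subsequence along which $[\hat\phi_{s_k}]\to\xi\in K$. Then
\begin{align*}
d_{\mathcal T}\bigl([\hat\phi_{t_k}],\tau_*\bigr)
&= d_{\mathcal T}\bigl(\sigma_{\hat f}^{\circ n_k}([\hat\phi_{s_k}]),\tau_*\bigr)\\
&\le d_{\mathcal T}\bigl([\hat\phi_{s_k}],\xi\bigr)+d_{\mathcal T}\bigl(\sigma_{\hat f}^{\circ n_k}(\xi),\tau_*\bigr),
\end{align*}
and both terms on the right tend to $0$. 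Hence $[\hat\phi_t]\to\tau_*$ as $t\to\infty$, and by continuity of $R_t$ in $[\hat\phi_t]$ the rational functions $R_t$ converge to the rational map attached to $\tau_*$, which is M\"obius conjugate to $R$.

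The only real obstacle is the correct application of Thurston's theorem: one must verify that $\hat f$ satisfies its hypotheses — hyperbolic orbifold (this is where the hyperbolicity of $R$ together with $|P_R|\ge 3$ enters) and absence of a Thurston obstruction (which is automatic, as $R$ already realizes $\hat f$) — and one must use the fact, noted in Section \ref{s:Thurston}, that $R_t$ is determined by, and varies continuously with, the Teichm\"uller class $[\hat\phi_t]$ rather than with a particular choice of representative homeomorphism.
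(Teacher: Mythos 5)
Your proposal is correct and follows the same overall strategy as the paper: show that the Teichm\"uller classes $[\hat\phi_t]$ converge to the fixed point $\tau_*$ of the Thurston pullback map, then deduce convergence of the $R_t$. The differences are in the details, and one step deserves a caveat. For the Teichm\"uller convergence, your argument via compactness of $K=\{[\hat\phi_s]:s\in[0,1]\}$ together with the non-expanding property of $\sigma_{\hat f}$ is a clean, self-contained alternative to the paper's two-step argument (integer convergence from the ``easy part'' of Thurston's theorem, then interpolation using contraction). You also spell out the hypotheses of Thurston's theorem --- hyperbolic orbifold via $|P_R|\ge3$ and hyperbolicity of $R$, absence of obstruction since $R$ realizes $\hat f$ --- which the paper leaves implicit; that is a genuine improvement in exposition. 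One small imprecision: the fixed point $\tau_*$ is unique in $\mathcal T(\CP^1,P)$ itself (this is the rigidity part of Thurston's theorem), not merely up to the mapping class group, and uniqueness in $\mathcal T$ is precisely what your statement $\sigma_{\hat f}^{\circ n}(\tau)\to\tau_*$ for \emph{every} $\tau$ requires.

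The genuine divergence is in the final step. You invoke ``continuity of $R_t$ in $[\hat\phi_t]$,'' attributing it to Section~\ref{s:Thurston}. But that section only asserts that $R_n$ is \emph{uniquely determined} by $[\hat\phi_n]$ and the normalization $\hat\phi_n|_{P_0}=\mathrm{id}$; it does not claim continuity. The claim is in fact true --- with the normalization, the assignment $\tau\mapsto R_\tau$ is holomorphic on $\mathcal T$, essentially because Thurston's pullback map is holomorphic --- but it needs a separate justification that you do not supply. The paper sidesteps this by a different device: it converts Teichm\"uller convergence into an explicit family of quasiconformal homeomorphisms $h_t$ with dilatation tending to $1$ and $h_t=\mathrm{id}$ on $P_0$, hence $h_t\to\mathrm{id}$ uniformly; then any subsequential limit of $R_t$ is a critically finite rational map homotopic to $\hat\phi_\infty\circ\hat f\circ\hat\phi_\infty^{-1}$ rel $\hat\phi_\infty(P)$ through branched coverings, and Thurston's uniqueness (together with the three-point normalization) forces all such limits to coincide. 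So the paper's route replaces your continuity claim with a partial-limits argument. Both are valid; yours is shorter but implicitly relies on a nontrivial fact about the Thurston pullback that should at least be named.
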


In the sequel, we will always assume that $R_t$ converge to $R$,
since nothing changes in the statement of the Main Theorem if we
replace the function $R$ by its M\"obious conjugate.

\begin{proof}
We know that $\hat f$ is Thurston equivalent to $R$.
From Thurston's Characterization Theorem \cite{DH93} (in fact, from its easy part)
it follows that the classes of $\hat\phi_n$ converge to the class of
some homeomorphism $\hat\phi_\infty:S^2\to\CP^1$ in the Teichm\"uller space of $(\CP^1,P)$.
The path $t\mapsto [\hat\phi_t]$, $t\in [0,\infty)$ converges in the Teichm\"uller space
of $(\CP^1,P)$ as well.
This follows from the convergence of $[\hat\phi_n]$ and the
contraction property of Thurston's pullback map.

Since the class $[\hat\phi_\infty]$ of $\hat\phi_\infty$ in the Teichm\"uller space
coincides with the class
$[M\circ\hat\phi_\infty]$ for every M\"obius transformation $M$, we can assume that
$\hat\phi_\infty=id$ on $P_0$.
Convergence of $[\hat\phi_t]$ to $[\hat\phi_\infty]$ means that there is a family of
of quasiconformal homeomorphisms $h_t:\CP^1\to\CP^1$ such that the quasiconformal constant
of $h_t$ tends to 1, and the equality $\hat\phi_t=h_t\circ\hat\phi_\infty$
holds on $P$ and holds on $\CP^1$ up to isotopy relative to $P$.
The maps $\hat\phi_t$ and $\hat\phi_\infty$ are the identity on $P_0$, hence so is $h_t$.
It follows that $h_t$ converge uniformly to the identity.

Note that the branched covering $h_t^{-1}\circ R_t\circ h_{t+1}$ is homotopic
to $\hat\phi_\infty\circ f\circ\hat\phi_\infty^{-1}$ relative to the set $\hat\phi_\infty(P)$ through
branched coverings.
Since $h_t\to id$, any partial limit of $R_t$ as $t\in\infty$
is a rational function homotopic to $\hat\phi_\infty\circ f\circ\hat\phi_\infty^{-1}$ relative to the set
$\hat\phi_\infty(P)$ through branched coverings
(in particular, this rational function is critically finite, hyperbolic and Thurston
equivalent to $\hat f$).
By Thurston's Uniqueness Theorem, such rational function is unique.
\end{proof}

Recall that, by our assumptions, $P\cap Z=\emptyset$.
Set $Z_t$ to be the union of $Q^{-i}(Z_0)$ for $i$ running from 1
to the smallest integer that is greater than or equal to $t$.
We will now define a family of holomorphic maps
$\Phi_t:\CP^1-Z_t\to\CP^1$
with the following properties:
$$
R_t\circ\Phi_{t+1}=\Phi_t\circ Q,\quad\Phi_t|_P=\hat\phi_t|_P,
$$
where the rational functions $R_t$ are the same as before.
For $z\not\in U_n$, where $n\ge t$, we set $\Phi_t(z)=\hat\phi_t(z)$.
This value is well-defined by the construction of $\hat\phi_t$ and the
same argument as in Proposition \ref{P:indep}.
On the other hand, for every $z\not\in Z_t$, we can choose $U$
such that $z\not\in U_n$, so that the definition applies.
The holomorphy of $\Phi_t$ follows from the fact that locally near
$z\not\in Z_t$,
the function $\Phi_t(z)$ is a branch of $R_{t-1}^{-1}\circ\Phi_t\circ Q$.

The key lemma is the following:

\begin{lem}
\label{L:unif}
  The maps $\Phi_n:\CP^1-Z\to\CP^1$, $n=1,2,\dots$, converge uniformly.
\end{lem}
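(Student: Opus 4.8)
The plan is to exploit the contraction property of Thurston's pullback map on Teichm\"uller space, together with the compatibility relations $R_n\circ\Phi_{n+1}=\Phi_n\circ Q$ and the normalization $\Phi_n|_{P_0}=\hat\phi_n|_{P_0}=id$, to show that the sequence $\Phi_n$ is uniformly Cauchy on $\CP^1-Z$. First I would recall from the proof of Proposition \ref{P:R_n-conv} that the classes $[\hat\phi_t]$ converge in the Teichm\"uller space of $(\CP^1,P)$, so that there are quasiconformal homeomorphisms $h_t\to id$ uniformly with $\hat\phi_t=h_t\circ\hat\phi_\infty$ on $P$ and up to isotopy rel $P$, and $R_t\to R$. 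The point is to upgrade this convergence of the \emph{classes} $[\hat\phi_t]$ to honest uniform convergence of the \emph{maps} $\Phi_t$ on the complement of $Z$, where no Teichm\"uller ambiguity survives because $\Phi_t$ is pinned down pointwise by the dynamical functional equation.

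The key step is a pullback estimate. Fix a point $z\notin Z$ and fix $N$ large enough that $z\notin U_N$ for the chosen neighborhood $U$ of $Z_0$; then for all $n\ge N$ we have $\Phi_n(z)=\hat\phi_n(z)$, and moreover all forward iterates $Q^{\circ k}(z)$ that enter the functional equation stay away from the relevant preimages of $U$. Writing $\Phi_n(z)$ as the appropriate branch of $R_{n-1}^{-1}\circ\cdots\circ R_{n-m}^{-1}$ applied to $\Phi_{n-m}(Q^{\circ m}(z))$, I would use that $R$ is hyperbolic and critically finite: the relevant inverse branches of $R^{\circ m}$ are uniformly contracting in the hyperbolic (or spherical orbifold) metric on $\CP^1-P_R$, with contraction factor $\le C\lambda^m$, $\lambda<1$, uniformly over the sphere minus a fixed neighborhood of $P_R$. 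Since $R_n\to R$ uniformly, the same uniform contraction holds for the composed inverse branches of the $R_n$'s for $n$ large, up to a harmless constant. Combined with the boundedness of the "initial errors" $\mathrm{dist}(\Phi_{n-m}(w),\Phi_{k-m}(w))$ coming from $h_t\to id$, this gives $\mathrm{dist}(\Phi_n(z),\Phi_k(z))\le C\lambda^m + o(1)$ as $n,k\to\infty$, with constants independent of $z\in\CP^1-Z$. Letting $m$ grow with $n,k$ yields the uniform Cauchy property.

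The main obstacle I anticipate is making the contraction estimate genuinely \emph{uniform in $z$} near the set $Z$ itself: as $z$ approaches $Z$, the minimal $m$ for which the backward orbit construction applies tends to infinity, and the point $Q^{\circ m}(z)$ may come close to the postcritical set where hyperbolic contraction degenerates. The resolution should be that $Z=\bigcup_n Q^{-n}(Z_0)$ is defined precisely so that $\CP^1-Z$ is forward invariant and the forward orbit of any $z\notin Z$ stays in $\CP^1-Q^{-1}(U)\subset\CP^1-(\text{nbhd of }Z_0)$, which by the choice in Proposition \ref{P:Z0} is disjoint from a neighborhood of $\eta(R(P_R))\supseteq P'$; one then needs that the forward orbit stays a definite hyperbolic distance from $P_R$ under $R$, using hyperbolicity of $R$ (expansion near $P_R$ pushes orbits away from it). A second, more technical point is controlling the branch selection: one must check that the branches of $R_{n-1}^{-1}\circ\Phi_n\circ Q$ chosen in the definitions of $\Phi_n$ and $\Phi_k$ are "the same" in a way compatible with the estimate, which follows by a connectedness/monodromy argument along the isotopy $\tilde\sigma_t$ exactly as in Proposition \ref{P:indep}. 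Once these uniformity issues are handled, the Arzel\`a--Ascoli-type conclusion and hence uniform convergence on $\CP^1-Z$ follow routinely.
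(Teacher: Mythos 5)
Your overall strategy---pull back under the contracting inverse branches of $R_n$, bound the ``initial error'' by the Teichm\"uller-space convergence, and take a double limit in $m$ and $n,k$---is the same underlying idea as the paper's, but the proposed execution has a genuine gap exactly where the paper does the real work. The obstruction you identify in your last paragraph is the right one, but your proposed resolution is wrong: it is \emph{not} true that the forward $Q$-orbit of $z\in\CP^1-Z$ stays a definite distance from $P'=\eta(R(P_R))$, nor that $\Phi_n(z)$ stays a definite distance from $P_R$. In fact $P\subset\CP^1-Z$ and $\Phi_n|_P=\hat\phi_n|_P$ converges to a subset of $P_R$, so there are plenty of $z\in\CP^1-Z$ for which $\Phi_n(z)$ is arbitrarily close to $P_R$. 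Consequently the ``uniform contraction away from a fixed neighborhood of $P_R$'' step does not apply, and you cannot push the estimate through as written.

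The paper's treatment of this difficulty is the whole content of Section~4 and Lemma~\ref{L:exp-metric}, and it is different in two essential respects from what you propose. First, the metric is \emph{not} the hyperbolic or orbifold metric on $\CP^1-P_R$ (which is cusp-like near $P_R$ and would make the ``initial errors'' unbounded there), but a custom piecewise-smooth metric equal to $const\cdot|d\xi|/|\xi|$ near each point of $P_R$---a cylinder-type metric---with respect to which $R$ is \emph{uniformly expanding} on $\CP^1-R^{-1}(P_R)$. Second, the finiteness of distances near $P_R$ is obtained not by keeping orbits away from $P_R$, but by exploiting that $\Phi_n$ is \emph{holomorphic} near $P_f$ with a simple zero of $\xi\circ\Phi_n$ there: the ratio $\xi\circ\chi/\xi\circ\chi^*$ then extends holomorphically across $P_f$, so the cylinder-metric distance is finite and controlled. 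Your proposal does not invoke holomorphy of $\Phi_n$ near $P_f$ at all, and without it the distance estimate near $P_R$ has no reason to be uniform. Finally, the paper also explicitly remarks that $\CP^1-Z$ is too bad a space to work on directly, and compactifies it to the inverse-limit space $X$ before running the contraction argument; your argument works directly on $\CP^1-Z$, where the uniform-Cauchy reasoning (``Arzel\`a--Ascoli-type conclusion'') is not clearly available.
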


\begin{proof}[Proof of Theorem \ref{T:holmodZ} assuming Lemma \ref{L:unif}]
  Every map $\Phi_n$ is a restriction of a holomorphic function defined on
the complement to a real semi-algebraic set of positive codimension.
By definition of a holomorphic function on $\CP^1-Z$, it follows that the uniform
limit $\Phi$ of $\Phi_n$ is holomorphic on $\CP^1-Z$.
\end{proof}

Lemma \ref{L:unif} is hard to approach directly because $\CP^1-Z$ is a bad space.
Therefore, we will consider a certain compactification of it.

Let $X_0$ be the Caratheodory compactification of $\CP^1-Q^{-1}(Z_0)$.
This is a compact real 2-dimensional manifold with boundary
that has a canonical projection $\pi_0$ onto $\CP^1$.
The space $X_0$ is not necessarily connected.
Define $X$ as the set of sequences $(x_n)$ in $X_0$, $n=1,2,\dots$ such that
$\pi_0(x_{n+1})=Q(\pi_0(x_n))$ for all $n$.
The topology on $X$ is induced from the direct product topology on
the space of all sequences.
Thus $X$ is a compact Hausdorff space.
Define a continuous self-map $f:X\to X$ as follows: if $x$ is a sequence
$(x_1,x_2,x_3,\dots)$, then $f(x)=y$, where $y$ is the sequence $(x_2,x_3,\dots)$.
There is also a natural projection $\pi:X\to\CP^1$ given by
the formula $\pi(x_1,x_2,\dots)=\pi_0(x_1)$.
This projection is a semiconjugacy between $f$ and $Q$.
Note that $X$ can be identified with the projective limit of
Caratheodory compactifications of $\CP^1-Z_n$.

Since $P\cap Z=\emptyset$, the map $\pi$ restricted to $\pi^{-1}(P)$ is one-to-one.
Set $P_f=\pi(P)$.
There exists a one-parameter family of continuous maps
$\phi_t:X\to\CP^1$ such that $\phi_t=\Phi_t\circ\pi$ on $\pi^{-1}(\CP^1-Z)$.
Indeed, every $\Phi_t$ extends to the Caratheodory compactification of $\CP^1-Z_t$.
The maps $\phi_t$ make the following diagram commutative:
$$
\begin{CD}
   (X,f^{-1}(P_f)) @>f>> (X,P_f)\\
   @V \phi_{t+1} VV @VV \phi_t V\\
   (\CP^1,R_t^{-1}(P_t)) @>R_{t}>> (\CP^1,P_t)
  \end{CD}
\eqno{(*)}
$$
where $P_t=\phi_t(P_f)$.
The restrictions of the maps $\hat\phi_t$ to $P_f$ converge.
This follows from the convergence of $\hat\phi_t$ in the Teichm\"uller space of
$(\CP^1,P)$ (more precisely, from the convergence of the projections of $\hat\phi_t$ to the
moduli space of $\CP^1-P$).
Hence, the restrictions of $\phi_t$ to $P_f$ converge as well.
It is clear that, for every critical value $v$ of $\hat f$,
the limit of $\hat\phi_t(v)$ is a critical value of $R$.
It follows that the limit of $\hat\phi_t(z)$ is in $P_R$ for every $z\in P$.
It also follows that the limit of $\phi_t(x)$ is in $P_R$ for every $x\in P_f$.

\begin{lem}
  \label{L:finvP}
  There exists a map $\iota:f^{-1}(P_f)\to R^{-1}(P_R)$
  such that the following diagram is commutative
  $$
 \begin{CD}
   f^{-1}(P_f) @>f>> P_f\\
   @V \iota VV @VV\iota V\\
   R^{-1}(P_R) @>R>> P_R
  \end{CD}
$$
  and the restrictions of $\phi_t$ to the set $f^{-1}(P_f)$ converge to $\iota$.
\end{lem}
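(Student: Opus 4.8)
The plan is to build $\iota$ as a limit of the restrictions $\phi_t|_{f^{-1}(P_f)}$ and then verify the commuting square by passing the diagram $(*)$ to the limit. The main point is that the values $\phi_t(x)$ for $x\in f^{-1}(P_f)$ are pinned down by a finite set of algebraic constraints, so convergence is forced the same way it was in Proposition \ref{P:indep}: for each $x\in f^{-1}(P_f)$ we have $f(x)\in P_f$, and from $(*)$ we get $R_t\circ\phi_{t+1}(x)=\phi_t(f(x))$. We already know from the discussion preceding the lemma that $\phi_t(f(x))$ converges (to a point of $P_R$), and that $R_t\to R$ by Proposition \ref{P:R_n-conv}. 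Hence $\phi_{t+1}(x)$ is, for large $t$, confined to a set converging (in the Hausdorff sense on finite sets) to the finite set $R^{-1}(\lim_t \phi_t(f(x)))$. Together with continuity of the path $t\mapsto\phi_t(x)$ this forces $\phi_t(x)$ to converge; call the limit $\iota(x)$. Since the limit of $\phi_t(f(x))$ lies in $P_R$ and $R_t\circ\phi_{t+1}(x)=\phi_t(f(x))$, taking $t\to\infty$ gives $R(\iota(x))=\lim_t\phi_t(f(x))\in P_R$, so indeed $\iota(x)\in R^{-1}(P_R)$.

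Next I would check that the square commutes. For $x\in f^{-1}(P_f)$, by what was said before the lemma the limit of $\phi_t(f(x))$ is the restriction to $P_f$ of the already-constructed limiting map on $P_f$; call that map $\iota$ as well on $P_f$ (these two definitions of $\iota$ are compatible because a point of $f^{-1}(P_f)$ that happens to lie in $P_f$ gets the same limit either way — the value $\phi_t$ at a point does not know which copy of the constraint one uses). Then the identity $R(\iota(x))=\lim_t\phi_t(f(x))=\iota(f(x))$ is exactly the commutativity $R\circ\iota=\iota\circ f$ on $f^{-1}(P_f)$. The vertical maps are $\iota$ on both sides, the top map is $f$, the bottom is $R$, so the diagram is as claimed.

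The one genuine subtlety — and the step I expect to be the main obstacle — is making sure the limiting values actually lie in $R^{-1}(P_R)$ and not merely in some larger finite set, i.e. that $\iota$ lands where the statement says. This is where one must use that each $R_t$ has the \emph{same} combinatorics and that the postcritical set $P$ is not moving: the points of $f^{-1}(P_f)$ either are themselves postcritical (handled above) or are genuine preimages, and along the Thurston iteration the preimages of the marked set stay preimages, so the relation $R_t\circ\phi_{t+1}=\phi_t\circ f$ on $f^{-1}(P_f)$ is an honest equality of marked points, not just an equality up to isotopy. Passing to the limit then keeps $\iota(x)$ inside $R^{-1}(P_R)$. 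A secondary, purely technical point is the continuity in $t$ of $t\mapsto\phi_t(x)$ at the points $x\in f^{-1}(P_f)$: this follows because $\phi_t$ was built from the continuous family $\hat\phi_t$ via $\phi_t=\Phi_t\circ\pi$ and the extension to the Caratheodory compactification is continuous, and because $f^{-1}(P_f)$ is disjoint from the sets where the construction of $\Phi_t$ is ambiguous (again since $P\cap Z=\emptyset$ and the neighborhoods $U$ can be chosen to avoid $P'$). With continuity and the finite-set confinement in hand, convergence of $\phi_t|_{f^{-1}(P_f)}$ to $\iota$ is immediate, completing the proof.
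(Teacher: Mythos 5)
Your proof is correct and takes essentially the same route as the paper: you trap the subsequential limits of $\phi_t(x)$ in the finite set $R^{-1}(a)$ using the exact Thurston pullback relation $R_{t-1}\circ\phi_t=\phi_{t-1}\circ f$ together with $R_t\to R$ and the known convergence on $P_f$, invoke continuity of $t\mapsto\phi_t(x)$ to force convergence (the paper phrases this via connectedness of the $\omega$-limit set), and then pass to the limit in the diagram $(*)$ for commutativity. The paragraph you flag as the main subtlety is actually already disposed of by your own identity $R(\iota(x))=\lim_t\phi_t(f(x))\in P_R$; the worry about ``equality of marked points versus equality up to isotopy'' is a non-issue, since $R_t\circ\phi_{t+1}=\phi_t\circ f$ is an exact equality of maps by construction of Thurston's algorithm.
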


\begin{proof}
Consider a point $x\in f^{-1}(P_f)$.
We have proved that the points $\phi_t\circ f(x)\in P_t$ converge to some point $a\in P_R$.
Let $t_n$ be any sequence such that $\phi_{t_n}(x)$ converges; denote the limit by $b$.
Passing to the limit in both sides of the equation
$R_{t_n-1}\circ\phi_{t_n}(x)=\phi_{t_n-1}\circ f(x)$, we obtain that $R(b)=a$.
It follows that the entire $\omega$-limit set of the family $\phi_t(x)$
is contained in the finite set $R^{-1}(a)$.
As the $\omega$-limit set is connected, this implies that $\phi_t(x)$ converges to $b$
as $t\to\infty$.
Set $\iota(x)=b$.

  The commutative diagram in the statement of the lemma is obtained
  by passing to the limit as $t\to\infty$ in the diagram $(*)$
and using that $\phi_t(x)$ converges to a point in $P_R$
for every $x\in P_f$.
\end{proof}

Lemma \ref{L:unif}, and hence also the Main Theorem, is now reduced to

\begin{thm}
  \label{T:unif}
  The maps $\phi_n:X\to\CP^1$ converge uniformly.
\end{thm}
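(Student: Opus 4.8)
The plan is to prove that the sequence $(\phi_n)$ is uniformly Cauchy on the compact space $X$, by establishing for the quantities $D_n:=\sup_{x\in X}\mathrm{dist}(\phi_n(x),\phi_{n+1}(x))$ a contracting recursion of the shape $D_n\le\lambda\,D_{n-1}+\epsilon_n$, with a fixed $\lambda<1$ and $\sum_n\epsilon_n<\infty$; this forces $\sum_n D_n<\infty$ and hence uniform convergence. The metric will be the orbifold metric $\omega$ of the orbifold $\mathcal O_R$ of $R$. This orbifold is hyperbolic: every point of $P_R$ is a genuine cone point, being a critical value of some iterate of $R$; there are at least three of them; and $\mathcal O_R$ is not one of the parabolic sphere orbifolds, since $R$ is hyperbolic and hence not a Latt\`es map. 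Off any fixed neighbourhood $W$ of $P_R$ the metric $\omega$ is comparable to the spherical metric, so it is enough to prove uniform $\omega$-convergence on $\pi^{-1}(\CP^1\setminus W)$ and to treat $\pi^{-1}(W)$ separately.

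The recursion comes from the commutative squares $(*)$. Since $R_{n-1}\circ\phi_n=\phi_{n-1}\circ f$ and $\phi_n$ is continuous, $\phi_n$ is locally the composition of $\phi_{n-1}\circ f$ with a branch of $R_{n-1}^{-1}$ depending continuously on the point, and likewise $\phi_{n+1}$ is built from $\phi_n\circ f$ and a branch of $R_n^{-1}$. Comparing the two via the intermediate map $(R_{n-1}^{-1})_{\mathrm{br}}\circ\phi_n\circ f$ yields
\[
\mathrm{dist}\bigl(\phi_n(x),\phi_{n+1}(x)\bigr)\ \le\ \mathrm{Lip}_\omega\bigl(R_{n-1}^{-1}\bigr)\cdot\mathrm{dist}\bigl(\phi_{n-1}(fx),\phi_n(fx)\bigr)\ +\ \epsilon_n,
\]
where $\epsilon_n$ bounds the discrepancy between the $R_{n-1}^{-1}$- and $R_n^{-1}$-branches at $\phi_n(fx)$ and is controlled by $\|R_{n-1}-R_n\|$. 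By Proposition \ref{P:R_n-conv}, sharpened to exponential speed — which holds because the Thurston pullback map of the hyperbolic critically finite branched covering $\hat f$ is a strict contraction near its fixed point — one has $\|R_{n-1}-R_n\|=O(\rho^n)$ with $\rho<1$, so $\sum_n\epsilon_n<\infty$; and, away from the critical values of $R$, the two families of inverse branches correspond to one another, which one checks using the continuous interpolation $\phi_t$ of Section \ref{s:Thurston}.

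It remains to make $\mathrm{Lip}_\omega$ strictly less than $1$. For $\omega$ every inverse branch of $R$ is non-expanding, and the inverse branches of a fixed iterate $R^N$ contract by a uniform factor $\lambda<1$ outside a neighbourhood $W$ of the superattracting cycles of $R$; near those cycles $R$ is only asymptotically an isometry for $\omega$ (in a B\"ottcher coordinate it is $z\mapsto z^d$, and the cusp metric $|dz|/(|z|\log(1/|z|))$ is invariant under $z\mapsto z^{1/d}$). Iterating the one-step bound down to level $0$, $\mathrm{dist}(\phi_n(x),\phi_{n+1}(x))$ is dominated by the product of the Lipschitz factors of the inverse branches along the orbit $\phi_n(x),\phi_{n-1}(fx),\dots,\phi_0(f^nx)$ times $\mathrm{diam}_\omega(\CP^1\setminus W)$, plus $\sum_k\epsilon_k$. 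This product is at most $\lambda^{m}$, with $m$ essentially the number of steps of that orbit spent outside $W$; hence the one-step distances decay uniformly and summably on the set of $x$ whose orbits visit $\CP^1\setminus W$ with positive lower frequency. Matching this up with the complementary set is, together with the next paragraph, the heart of the matter: the delicate point is to bound, uniformly in $x$, the sojourn of the orbit inside $W$, i.e.\ to control the interplay of the $f$-dynamics on $X$ with the Julia/Fatou partition of $R$.

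The complementary, and I expect hardest, case is that of points $x$ for which $\phi_n(x)$ eventually remains in the neighbourhood $W$ of a superattracting cycle $C\subset P_R$ of $R$, where the recursion gives no contraction and a direct argument is required. Near $C$ the rational map $R$ is, in a B\"ottcher coordinate, the power map $z\mapsto z^d$, and so is the branched covering $\hat f$ near the superattracting cycle corresponding to $C$, which lies in $P'$ and hence outside the support of $\tilde\sigma$ (there $\hat f=Q$); moreover $C\subset P_R$ and the corresponding cycle of $X$ lies in $P_f$, where the maps $\phi_n$ already converge by Lemma \ref{L:finvP}, while the boundary values of $\phi_n$ along $\partial W$ lie in the already-controlled region $\CP^1\setminus W$. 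Together with the normalisation on $P_0$, these constraints pin $\phi_n|_{\pi^{-1}(W)}$ closely enough that a direct B\"ottcher-coordinate estimate gives their uniform convergence. Combining the two cases shows that $(\phi_n)$ is uniformly Cauchy on $X$; this proves Theorem \ref{T:unif}, and hence, via Lemma \ref{L:unif} and the reductions already carried out, the Main Theorem.
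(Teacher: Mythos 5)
Your overall philosophy---a contraction-type recursion for $D_n=\sup_x\mathrm{dist}(\phi_n(x),\phi_{n+1}(x))$ coming from lifting through $R^{-1}$, plus an error term from $R_{n-1}\ne R_n$---is the same as the paper's. But there are two genuine gaps, one of which is the heart of the problem and which you explicitly flag as unresolved.

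The main gap is your choice of metric. You use the hyperbolic orbifold metric $\omega$ of $\mathcal O_R$. Since $R$ is hyperbolic and critically finite, every periodic cycle in $P_R$ is superattracting, so $\mathcal O_R$ has cusps (infinite-order cone points) there, and as you yourself observe, in a B\"ottcher coordinate the cusp metric $|dz|/(|z|\log(1/|z|))$ is only \emph{asymptotically} preserved by $z\mapsto z^d$: $R$ is not uniformly expanding for $\omega$ on any neighbourhood of the superattracting cycles. You then say that handling $\pi^{-1}(W)$ ``is, together with the next paragraph, the heart of the matter'' and that ``a direct B\"ottcher-coordinate estimate gives their uniform convergence,'' but you never supply that estimate; the constraints you list (convergence on $P_f$, boundary control on $\partial W$, holomorphy, the normalisation at $P_0$) do not obviously pin down $\phi_n$ in $W$ --- for instance there is no maximum principle available because $\phi_n|_{\pi^{-1}(W)}$ is not a bounded holomorphic function with prescribed boundary values in any usable sense. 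The paper sidesteps this entirely by \emph{not} using the orbifold metric: Lemma \ref{L:exp-metric} builds a piecewise smooth metric that equals a constant multiple of $|d\xi|/|\xi|$ (a flat cylinder metric, not a cusp metric) near every point of $P_R$, and in that metric $z\mapsto z^d$ multiplies lengths by exactly $d\ge 2$. Thus $R$ is \emph{uniformly} expanding on all of $\CP^1-P_R$, the recursion has a single contraction factor $E^{-1}<1$ everywhere, and no separate treatment of the superattracting region is needed. This choice of metric is not a cosmetic simplification; it is what makes the contraction argument close.

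A secondary gap: your recursion $D_n\le\lambda D_{n-1}+\epsilon_n$ needs $\sum_n\epsilon_n<\infty$, which you get by asserting exponential convergence $\|R_{n-1}-R_n\|=O(\rho^n)$ from strict contraction of the Thurston pullback map. That sharpened form of Proposition \ref{P:R_n-conv} is plausible but is not proved in the paper and would require its own argument (the Teichm\"uller-space contraction of the pullback is locally uniform but extracting an explicit exponential rate down to $\|R_{n-1}-R_n\|$ needs care). The paper avoids needing any rate: it works with path \emph{lengths} $L_n$ in a function space $\Cc$, gets $L_n\le E^{-1}L_{n-1}+2\eps_n$ with only $\eps_n\to 0$, and closes the argument not by summing a series but via a stabilisation trick: it extends the connecting paths to infinity (Lemma \ref{L:ext}), producing fixed points $\delta_n(\infty)$ of the lifting, shows by a separated-fixed-points lemma (Lemma \ref{L:not-too-close}) that these stabilise, and deduces convergence from $L_n\to 0$ alone. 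Finally, the paper's Lemma \ref{L:lifting} and the correcting homeomorphisms $\psi_t$ make the branch-matching of inverse images rigorous in a way that your informal discussion of ``corresponding branches'' does not; this bookkeeping is where the actual content of making the recursion precise lives.
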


If this holds, then the maps $\Phi_n=\phi_n\circ\pi^{-1}$ on $\CP^1-Z$ also
converge uniformly.
The remaining part of the paper contains the proof of Theorem \ref{T:unif}.
This is a statement about uniform convergence of Thurston's algorithm.
As such, it is perhaps not surprising, although we state it for a topological
space $X$ that is not $S^2$
(actually, we need nothing from the space $X$ except that it is locally compact
and that some neighborhood of $P_f$ in $X$ has a structure of a Riemann surface;
however, specific properties of $\phi_n$ will be used, e.g. that the restrictions
of $\phi_n$ to $P_f$ converge and that $\phi_n$ are holomorphic near $P_f$).
Thurston's algorithm is generally expected to converge uniformly,
and theorems to this effect have been proved in a variety of contexts.
E.g. a general theorem about uniform convergence of Thurston's
algorithm has appeared in \cite{CT}.
It deals with hyperbolic but not necessarily critically finite rational functions.
I am grateful to Tan Lei for showing me a draft of this work.

The underlying ideas of the proof of Theorem \ref{T:unif} can be traced back to \cite{DH84}.
Very roughly, it is an application of the contraction principle to a
certain lifting map on a certain functional space.
It is even possible to state a general theorem of this sort but many fine
details would make its statement too cumbersome.
The things are not complicated but they are not straightforward either.

\section{The space $\Cc$}

{\footnotesize Notation: for a topological space $X$ and a metric space $Y$,
we denote by $C(X,Y)$ the set of all continuous maps from
$X$ to $Y$. We will always equip this set with the partially defined
uniform metric (note that the uniform distance between two elements of
$C(X,Y)$ may well be infinite).}

\smallskip

In this section, we start the proof of Theorem \ref{T:unif}.
We first set up a suitable function space.
In the next section, we prove the convergence in this space.
Consider the hyperbolic critically finite rational function $R$ from the
statement of the Main Theorem.
Recall that $P_R$ denotes the postcritical set of $R$.

We will need a metric on $\CP^1-P_R$ with certain properties:

\begin{lem}[Expanding metric on $\CP^1-P_R$]
\label{L:exp-metric}
There exists a piecewise smooth metric on $\CP^1-P_R$ equal to a constant
multiple of $|d\xi|/|\xi|$ near every point $z\in P_R$ for some local
holomorphic coordinate $\xi$ with $\xi(z)=0$
and such that the map $R:\CP^1-R^{-1}(P_R)\to\CP^1-P_R$ is uniformly expanding
with respect to this metric.
\end{lem}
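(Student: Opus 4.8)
The plan is to build the metric near $P_R$ first, where its form is prescribed, and then extend it over the compact remainder of the sphere while arranging expansion by a bootstrapping argument using hyperbolicity. Since $R$ is hyperbolic and critically finite, every critical orbit is either periodic and superattracting or lands on a superattracting cycle; in either case the point $z\in P_R$ is either a superattracting periodic point or a strictly preperiodic point, and $R$ near such a point is locally conjugate to $\xi\mapsto \xi^{d}$ for some $d=d(z)\ge 1$ (with $d>1$ exactly at the periodic critical points in $P_R$). In the coordinate $\xi$, the cylindrical metric $|d\xi|/|\xi|$ pulls back under $\xi\mapsto\xi^{d}$ to $d\cdot|d\xi|/|\xi|$, so $R$ multiplies this metric by the constant factor $d\ge 1$; choosing the constant multiples in front of $|d\xi|/|\xi|$ appropriately at the various points of the cycle, I can make $R$ strictly expanding (by a factor $\ge\lambda_0>1$) on a punctured neighborhood $V$ of $P_R$, and at worst weakly expanding (factor $\ge 1$) when crossing between points of a periodic cycle of a non-critical periodic point — but there are no non-critical periodic points in $P_R$ for a hyperbolic critically finite map except via the superattracting cycle itself, where $d>1$. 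So on a neighborhood $V\supset P_R$ I get honest expansion.

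Next I extend the metric. Fix the prescribed piecewise-smooth metric on $V$ and extend it arbitrarily, but piecewise smoothly and positively, to all of $\CP^1-P_R$; call the result $\mu_0$. The compact set $K=\CP^1-V$ maps under $R$ with $R^{-1}(P_R)\cap K$ removed into $\CP^1-P_R$, and since $R$ has no critical points outside the (superattracting) critical orbits, the key geometric fact is that $R$ is a local homeomorphism and, more importantly, the orbit of every point eventually enters $V$: hyperbolicity means the Julia set is contained in the expanding region and every point of the Fatou set is attracted to the superattracting cycle, hence eventually enters $V$. This gives a uniform $N$ such that $R^{\circ N}$ maps $\CP^1 - R^{-N}(P_R)$ into $V$ "with room to spare"; composing the estimates, $R^{\circ N}$ expands $\mu_0$ by a factor bounded below by some $\lambda^{N}$ times a constant depending on $\mu_0$ on $K$, which can be negative in the exponent. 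The standard fix is to replace $\mu_0$ by an averaged metric
$$
\mu = \sum_{j=0}^{N-1} c_j\, (R^{\circ j})^{*}\mu_0,
$$
with suitable positive constants $c_j$; a telescoping computation shows that with $c_j$ chosen geometrically the pullback $R^{*}\mu$ dominates $\lambda\,\mu$ for a uniform $\lambda>1$ on $\CP^1-R^{-1}(P_R)$, because the "bad" term $c_0\mu_0$ at step $0$ is compensated by the "good" expanded term $c_{N-1}(R^{\circ N})^{*}\mu_0\ge c_{N-1}\lambda_0 (R^{\circ(N-1)})^{*}\mu_0$ coming from the final entrance into $V$. One checks separately that $\mu$ still has the prescribed cylindrical form near $P_R$: each summand $(R^{\circ j})^{*}\mu_0$ is, near a point of $P_R$, either again of the form $\mathrm{const}\cdot|d\xi|/|\xi|$ (if $R^{\circ j}$ maps a neighborhood of that point into $V$ as a local power map) or smooth and bounded there, so after possibly shrinking $V$ and absorbing smooth terms the leading behavior is still $\mathrm{const}\cdot|d\xi|/|\xi|$ — here one uses that at a superattracting periodic point the term with $j$ equal to the period contributes the dominant cylindrical part while the others are subordinate, and at a preperiodic point only finitely many pullbacks are singular and they are all of the required form.

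The main obstacle I expect is the interface between the two regimes: ensuring that after the averaging the metric is still \emph{exactly} of the prescribed form $\mathrm{const}\cdot|d\xi|/|\xi|$ near each point of $P_R$ (not merely comparable to it), and simultaneously that the expansion estimate holds \emph{uniformly} down to the boundary where $K$ meets $V$ and near $R^{-1}(P_R)$. Near $R^{-1}(P_R)$ one must check that the metric on the source, pulled forward, behaves correctly: a point $w$ with $R(w)\in P_R$ but $w\notin P_R$ is a regular preimage, near which $R$ is a biholomorphism, so $(R)^{*}(\mathrm{const}\cdot|d\xi|/|\xi|)$ has a logarithmic singularity of the right type at $w$, and the domain $\CP^1-R^{-1}(P_R)$ is precisely chosen to excise these. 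Handling the constants so that the piecewise-smooth gluing of the cylindrical pieces to the averaged interior metric does not destroy either positivity or the expansion bound is the delicate bookkeeping; once the constants $c_j$ and the radius of $V$ are fixed compatibly, uniform expansion on the compact complement follows from compactness and continuity of the (piecewise smooth) norm ratio, which is bounded below by a positive number there.
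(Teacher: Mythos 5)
The plan you use—build the cylindrical metric near $P_R$ first, extend arbitrarily, and then repair expansion with the adapted-metric averaging $\mu=\sum_{j<N}c_j(R^{\circ j})^*\mu_0$—is genuinely different from the paper, which constructs \emph{two} expanding metrics (one from hyperbolicity near $J(R)$, one cylindrical near $P_R$ with a geometric-mean multiplier) and glues them with a small $\eps$ coefficient via a three-case buffer-zone estimate. Unfortunately the averaging route as you have written it has two gaps that I think are fatal to the argument, not merely delicate bookkeeping.

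First, the assertion that ``the orbit of every point eventually enters $V$'' and hence that there is a uniform $N$ with $R^{\circ N}(\CP^1 - R^{-N}(P_R))\subset V$ is false. For a hyperbolic map, $P_R$ is disjoint from $J(R)$, the Julia set is completely invariant, and orbits on $J(R)$ never enter a small neighborhood $V$ of $P_R$. (More simply: $R^{\circ N}$ is surjective and $R^{-N}(P_R)$ is finite, so $R^{\circ N}(\CP^1-R^{-N}(P_R))$ is all of $\CP^1-P_R$.) Your parenthetical remark that ``the Julia set is contained in the expanding region'' is the right intuition, but the ``expanding region'' you are appealing to is an expanding metric near $J(R)$ coming from hyperbolicity, and you never construct it, so it does not enter your $\mu_0$ or your averaged $\mu$. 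Without it, there is no reason for $(R^{\circ N})^*\mu_0\ge\lambda^N\mu_0$ to hold on $J(R)$ for any $N$, and the telescoping produces nothing there. This is exactly the half of the construction that the paper does not skip: it builds $g_0$ near $J(R)$ using hyperbolicity, extends it \emph{outward} by the pushforward formula $g_0(z)=E_0\max_i g_0(S_i(z))$ (where $S_i$ are inverse branches), and only then glues in the cylindrical metric.

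Second, the averaged metric $\mu=\sum_{j=0}^{N-1}c_j(R^{\circ j})^{*}\mu_0$ is not a finite piecewise-smooth metric on $\CP^1-P_R$. Since $\mu_0$ has cylindrical (hence logarithmically infinite) behavior at $P_R$, each pullback $(R^{\circ j})^{*}\mu_0$ blows up at every point of $R^{-j}(P_R)$, and the sum therefore blows up at all of $\bigcup_{j=1}^{N-1}R^{-j}(P_R)\setminus P_R$, which is a nonempty finite set \emph{inside} $\CP^1-P_R$. Your remark that ``the domain $\CP^1 - R^{-1}(P_R)$ is precisely chosen to excise these'' conflates the domain of the expanding map with the domain of the metric: the map $R:\CP^1-R^{-1}(P_R)\to\CP^1-P_R$ takes values in $\CP^1-P_R$, and the metric must be finite and piecewise smooth on that entire target, including at the points of $R^{-1}(P_R)\setminus P_R$. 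So even if the expansion estimate could be saved, the output of the averaging is not of the required form. (The paper's extension mechanism avoids this precisely because it pushes the Julia-set metric forward by inverse branches rather than pulling $\mu_0$ back by $R$, so no singularities are created at preimages of $P_R$.)

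Finally, a smaller point: in the local construction near a superattracting cycle you say ``choosing the constant multiples appropriately at the various points of the cycle'' gives strict expansion, but you should say what the choice is. The paper's choice is to force the multiplier at each step of the cycle to be exactly the geometric mean $E_1(z)=\bigl(\prod_i\nu(R^{\circ i}(z))\bigr)^{1/n}$ of the local degrees, via the recursion $\lambda(R(z))=E_1(z)\lambda(z)/\nu(z)$; it is worth spelling this out, because a naive choice can leave some step of the cycle non-expanding when some local degrees equal $1$.
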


\begin{proof}
It follows from hyperbolicity that there exists a neighborhood $V_0$ of the
Julia set $J(R)$ of $R$ and a Riemannian metric $g_0=\sigma(z)|dz|$ on $V_0$
such that $R$ is uniformly expanding with respect to $g_0$ i.e.
$$
\sigma\circ R(z)|dR(z)|\ge E_0 \sigma(z)|dz|\eqno{(1)}
$$
for some $E_0>1$ and all $z\in V_0\cap R^{-1}(V_0)$.
We can assume that $V_0$ is bounded by smooth curves and that $V_0=R^{-1}(R(V_0))$.
Now extend the metric $g_0$ to the set $R(V_0)-V_0-P_R$ by the following formula:
$$
g_0(z)=E_0\cdot\max_{i}g(S_i(z)),
$$
or, equivalently,
$$
\sigma(z)=E_0\cdot\max_{i}\left\{\sigma(S_i(z))\left|\frac{dS_i(z)}{dz}\right|\right\},
$$
where $S_i(z)$ are all local branches of $R^{-1}$ near $z$.
They are well defined since all critical values of $R$ belong to $P_R$.
With this definition, inequality $(1)$ holds also in $R(V_0)-P_R$.
Using the same formula, we can extend the metric $g_0$ to $R^{\circ m}(V_0)-P_R$
for every $m$, hence to the complement of an arbitrarily small
neighborhood of $P_R$.
The extended metric is piecewise smooth and satisfies inequality $(1)$
provided that $g_0$ is defined at both $z$ and $R(z)$ .

Now let $V_1$ be a small neighborhood of $P_R$ such that every
component of $V_1$ is a Jordan domain containing exactly one point of $P_R$.
By B\"ottcher's theorem, there exists a holomorphic function $\xi:V_1\to\C$
with simple zeros at all points of $P_R$
such that $\xi\circ R(z)=\xi(z)^{\nu(z)}$, where $\nu$ is a
locally constant function on $V_1$ taking
its values in $\N$.
We can also assume that $\xi$ is a holomorphic coordinate on every component of $V_1$.
Note that $R$ multiplies the metric $|d\xi|/|\xi|$ on $V_1$ by $\nu(z)$.
Set
$$
g_1(z)=\lambda(z)\frac{|d\xi(z)|}{|\xi(z)|},
$$
where $\lambda$ is a locally constant function on $V_1$, which we define below.
It suffices to define $\lambda$ on $P_R$.
Set
$$
E_1(z)=\lim_{n\to\infty}\left(\prod_{i=0}^{n-1}\nu(R^{\circ i}(z))\right)^{1/n}.
$$
This number is equal to the geometric mean of $\nu$ over the cycle,
to which $z$ eventually maps.
In particular $E_1(z)>1$.
The function $\lambda$ on $P_R$ is now defined by the property
$$
\lambda(R(z))=\frac{E_1(z)\lambda(z)}{\nu(z)}.
$$
If we fix an arbitrary positive value of $\lambda$ at an arbitrarily chosen
point of each periodic cycle in $P_R$, then this condition defines $\lambda$
uniquely.
The metric $g_1$ on $V_1-P_R$ thus defined gets multiplied by $E_1(z)$ under the map $R$.
Define the number $E_1>1$ as the minimum of $E_1(z)$ over all points in $P_R$.

We now want to combine the two metrics $g_0$ and $g_1$.
We can assume that $V_0\cup V_1=\CP^1-P_R$ and that both $V_0$ and $V_1$ are bounded by
smooth curves.
We can also assume that there is no point $z\in V_0-V_1$ such that
$R(z)\in V_1-V_0$
(so that every $R$-orbit that visits both $V_0$ and $V_1$
must enter the ``buffer zone'' $V_0\cap V_1$).
Set $g=\eps g_0$ on $V_0-V_1$, $g=g_1$ on $V_1-V_0$, and $g=\eps g_0+g_1$ on $V_0\cap V_1$.
As we will show, the map $R$ is uniformly expanding with respect to $g$ provided
that the number $\eps>0$ is small enough
so that e.g. $\eps g_0\le (\sqrt{E_1}-1)g_1$ everywhere on $V_0\cap V_1$.
Indeed, if $z\in V_0-V_1$ and $R(z)\in V_0\cap V_1$, then
$$
g(R(z))=\eps g_0(R(z))+g_1(R(z))\ge E_0\eps g_0(z)=E_0g(z).
$$
If $z\in V_0\cap V_1$ and $R(z)\in V_0\cap V_1$, then
$$
g(R(z))=\eps g_0(R(z))+g_1(R(z))\ge E_0\eps g_0(z)+E_1 g_1(z)\ge \tilde E g(z).
$$
where $\tilde E=\min(E_0,E_1)$.
Finally, if $z\in V_0\cap V_1$ and $R(z)\in V_1-V_0$, then
$$
g(R(z))=g_1(R(z))\ge E_1g_1(z)=(E_1-\sqrt{E_1})g_1(z)+\sqrt{E_1}g_1(z)\ge
$$
$$
\sqrt{E_1}\eps g_0(z)+\sqrt{E_1}g_1(z)=\sqrt{E_1}g(z).
$$
\end{proof}

In the sequel, we will write $Y$ for the space $\CP^1-P_R$ equipped with the
metric $g$ from Lemma \ref{L:exp-metric}.
Note that the metric $g$ is {\em proper}: every closed bounded set is compact.
It follows that $g$ is complete and locally compact.
Let $E>1$ be the expansion factor of $R$ with respect to the metric $g$.
In the notation of Lemma \ref{L:exp-metric}, we can set $E=\min(E_0,\sqrt{E_1})$.

We will use notation from Section \ref{s:Thurston}.
Note that there is an open neighborhood $O$ of the set $P_f$, on
which the map $\pi$ is one-to-one and such that $f(O)\subset O$.
We will assume that $O$ is sufficiently small.
The map $\pi$ defines a Riemann surface structure on $O$.
The maps $\phi_t$ are holomorphic on the set $O$ equipped with this structure.
Let $\Cc(O)$ denote the space of continuous maps $\chi:X\to\CP^1$
with the following properties:
\begin{enumerate}
  \item
  $\chi=\iota$ on $f^{-1}(P_f)$;
  \item
  $\chi^{-1}(P_R)\subseteq P_f$;
  \item
  $\chi^{-1}(R^{-1}(P_R))\subseteq f^{-1}(P_f)$;
  \item
  the restriction of $\chi$ to $O$ is holomorphic, and no point of $P_f$
  is a critical point of $\chi$.
\end{enumerate}
We will consider the following metric on $\Cc(O)$:
the distance between maps $\chi$ and $\chi^*\in\Cc(O)$ is the
uniform distance between the restrictions $\chi:X-P_f\to Y$ and
$\chi^*:X-P_f\to Y$ measured with respect to the metric $g$ on $Y$.
We need to prove that the distance between any two elements $\chi$ and $\chi^*$ of
$\Cc(O)$ is finite.
It suffices to make a local estimate near each point $x\in P_f$.
Let $W_x$ be a small neighborhood of $\iota(x)$, and
$\xi$ a holomorphic coordinate on $W_x$ such that $\xi(\iota(x))=0$.
Let $O_x$ be a small neighborhood of $x$ contained in $O$ and
such that $\chi(O_x)\subset W_x$ and $\chi^*(O_x)\subset W_x$.
Since both holomorphic functions $\xi\circ\chi$ and
$\xi\circ\chi^*$ have simple zeros at $x$, their ratio extends to
a holomorphic function on $O_x$ taking a nonzero value at $x$.
Note that the uniform distance between the maps $\chi:O_x-\{x\}\to Y$ and
$\chi^*:O_x-\{x\}\to Y$ in the metric $g$ is
$$
const\cdot\sup_{x'\in O_x-\{x\}}\left|\log\left(\frac{\xi\circ\chi(x')}{\xi\circ\chi^*(x')}\right)\right|
$$
for some local branch of the logarithm.
Indeed, the metric $g$ is equal to $const\cdot|d\log\xi|$ on $W_x$.
We see that the distance between $\chi$ and $\chi^*$ is finite.
A similar argument shows that the topology on $\Cc(O)$ coincides with the
topology induced from the uniform metric on $C(X,\CP^1)$.
Define $\Cc$ as the union of $\Cc(O)$ over all sufficiently small 
neighborhoods $O$ of $P_f$ such that $f(O)\subset O$. 
As a metric space, $\Cc$ is the inductive limit of the spaces $\Cc(O)$.

We will write $\tilde Y$ for $Y-R^{-1}(P_R)$.
Then $R:\tilde Y\to Y$ is a proper expansion with expansion factor $E$.
Being a proper map and a local homeomorphism, this map enjoys the
unique path lifting property.
This is a key to the following

\begin{lem}
  \label{L:lifting}
  If $\gamma:[0,1]\to\Cc(O)$ is a continuous path and $\tilde\chi_0\in\Cc(O)$
  is a map such that $R\circ\tilde\chi_0=\gamma(0)\circ f$, then
  there is a unique continuous path $\tilde\gamma:[0,1]\to\Cc(O)$ with
  the properties $\tilde\gamma(0)=\tilde\chi_0$ and
  $R\circ\tilde\gamma(t)=\gamma(t)\circ f$ for all $t\in [0,1]$.
\end{lem}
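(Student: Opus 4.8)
The plan is to construct $\tilde\gamma$ pointwise, lifting the base paths through the covering map $R\colon\tilde Y\to Y$, and then to check that the resulting one-parameter family of maps lies in $\Cc(O)$ and is continuous in the parameter. Fix $x\in X$. If $x\in f^{-1}(P_f)$, then $f(x)\in P_f$, and since every element of $\Cc(O)$ agrees with $\iota$ on $f^{-1}(P_f)\supseteq P_f$, the base path $s\mapsto\gamma(s)(f(x))$ is the constant path at $\iota(f(x))\in P_R$; property (1) of $\Cc(O)$ then forces $\tilde\gamma(t)(x)=\iota(x)$ for all $t$, which is consistent because $R(\iota(x))=\iota(f(x))$ by Lemma \ref{L:finvP}. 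If $x\notin f^{-1}(P_f)$, then $f(x)\notin P_f$, so property (2) gives $\gamma(s)(f(x))\in Y$ for every $s$, and the initial value $\tilde\chi_0(x)$, which satisfies $R(\tilde\chi_0(x))=\gamma(0)(f(x))\in Y$, lies in $\tilde Y=\CP^1-R^{-1}(P_R)$ because $P_R$ is forward invariant under $R$. Since $R\colon\tilde Y\to Y$ is a proper local homeomorphism (a finite covering), there is a unique lift of $s\mapsto\gamma(s)(f(x))$, $s\in[0,t]$, starting at $\tilde\chi_0(x)$; I would define $\tilde\gamma(t)(x)$ to be the endpoint of this lift. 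Uniqueness of $\tilde\gamma$ is then immediate: on $f^{-1}(P_f)$ the value is forced by property (1), and off $f^{-1}(P_f)$ properties (2) and (3) force any admissible $\tilde\gamma(t)$ to take values in $\tilde Y$, where unique path lifting applies.

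The routine half is to check that, away from $f^{-1}(P_f)$, each $\tilde\gamma(t)$ behaves as required. Continuity in $x$ follows from the standard fact that lifts through a covering map depend continuously on the base path and the initial point, both of which here depend continuously on $x$ (the base path uniformly in $s$, since $\gamma(s)$ and $f$ are continuous and $[0,1]$ is compact). Holomorphy on $O-f^{-1}(P_f)$ follows because there $\tilde\gamma(t)$ is locally a branch of $R^{-1}\circ\gamma(t)\circ f$, with $\gamma(t)$ holomorphic on $O$ and $f\colon O\to O$ holomorphic. Conditions (2) and (3) hold because these lifts have values in $\tilde Y$. Finally, continuity of $t\mapsto\tilde\gamma(t)$ in the uniform metric of $\Cc(O)$ uses the expansion: because the cusp form of $g$ near $P_R$ provides a uniform radius $\delta_0>0$ such that every $g$-ball of radius $\delta_0$ in $Y$ is evenly covered by $R$, and because $\gamma$ is uniformly continuous into $\Cc(O)$, for $|t-t'|$ small the base paths $s\mapsto\gamma(s)(f(x))$, $s\in[t,t']$, have $g$-diameter less than $\delta_0$ uniformly in $x$, hence their lifts have $g$-diameter at most $\delta_0/E$, so $\sup_{x\in X-P_f}d_g(\tilde\gamma(t)(x),\tilde\gamma(t')(x))\to 0$ as $t'\to t$.

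The main obstacle, as usual in this kind of argument, is the behavior at the boundary set $f^{-1}(P_f)$: I need continuity of $\tilde\gamma(t)$, holomorphy, and the ``no critical point on $P_f$'' condition there. Here I would again exploit the cusp form of $g$. Since $\gamma([0,1])$ is compact in $\Cc(O)$, there is a finite $D$ with $d_g(\gamma(s)(y),\gamma(0)(y))\le D$ for all $s\in[0,1]$ and all $y\in X-P_f$; hence for $x$ near a point $x_0\in f^{-1}(P_f)$ the base path $s\mapsto\gamma(s)(f(x))$ stays within $g$-distance $2D$ of $\gamma(0)(f(x))=R(\tilde\chi_0(x))$, and this point tends into the puncture at $\iota(f(x_0))\in P_R$ as $x\to x_0$. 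In the local coordinate $\xi$ at $\iota(f(x_0))$, where $g$ is a constant multiple of $|d\xi|/|\xi|$, the $g$-ball of radius $2D$ about a point with small $|\xi|$ is contained in the Euclidean region $\{|\xi|\le e^{2D}|\xi(R(\tilde\chi_0(x)))|\}$, which shrinks to the puncture as $x\to x_0$; pulling this region back through the local branch of $R$ at $\iota(x_0)$, the lift $\tilde\gamma(t)(x)$ is trapped in a Euclidean neighborhood of $\iota(x_0)$ shrinking to $\iota(x_0)$, so $\tilde\gamma(t)(x)\to\iota(x_0)=\tilde\gamma(t)(x_0)$, giving continuity at $x_0$. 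Holomorphy of $\tilde\gamma(t)$ at a point of $O\cap f^{-1}(P_f)$ then follows from Riemann's removable singularity theorem; at a point $x_0\in P_f$ the local degree of $\tilde\gamma(t)$ equals the local degree of $f$ at $x_0$ divided by that of $R$ at $\iota(x_0)$, which is $1$ by the same local-degree count that gives property (4) for $\tilde\chi_0$; and the inclusions $\tilde\gamma(t)^{-1}(P_R)\subseteq P_f$ and $\tilde\gamma(t)^{-1}(R^{-1}(P_R))\subseteq f^{-1}(P_f)$ near $f^{-1}(P_f)$ are inherited from the corresponding properties of $\tilde\chi_0$, since $\tilde\gamma(t)=\iota=\tilde\chi_0$ on $f^{-1}(P_f)$. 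I expect this passage to the boundary to be where the real work lies; the rest is the usual path lifting for coverings together with the contraction built into the metric $g$.
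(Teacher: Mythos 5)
Your proof is correct and follows essentially the same route as the paper: lift through the proper covering $R:\tilde Y\to Y$, then verify that the lifted family lies in $\Cc(O)$. Where the paper invokes the path-lifting property of $\chi\mapsto R\circ\chi$ on $C(X-f^{-1}(P_f),\tilde Y)$ abstractly (citing Spanier) and declares the continuous/holomorphic extension across $f^{-1}(P_f)$ ``obvious,'' you build the lift pointwise and supply the cusp-coordinate argument for the behavior at $f^{-1}(P_f)$---a more explicit rendering of the same method, not a different one.
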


We will call the path $\tilde\gamma$ a {\em lift} of the path $\gamma$.

\begin{proof}
  For every $t$, consider the restriction of $\gamma(t)$ to $X-P_f$.
  We obtain a path $\gamma_*:[0,1]\to C(X-P_f,Y)$.

  Consider the map $\Gc:C(X-f^{-1}(P_f),\tilde Y)\to C(X-f^{-1}(P_f),Y)$
  given by the formula $\Gc(\chi)=R\circ\chi$.
  The map $R:\tilde Y\to Y$ is a proper expansion, and $X-f^{-1}(P_f)$
  is a locally compact space (as a complement to finitely many points
  in a compact Hausdorff space).
  It is a standard fact from topology (see e.g. Spanier \cite{S}) that
  in this case the map $\Gc$ has the path lifting property:
  given a path $\alpha:[0,1]\to C(X-f^{-1}(P_f),Y)$ and an element
  $\tilde\chi_0\in C(X-f^{-1}(P_f),\tilde Y)$ such that $R\circ\tilde\chi_0=\alpha(0)$,
  there exists a unique path $\tilde\alpha:[0,1]\to C(X-f^{-1}(P_f),\tilde Y)$
  such that $R\circ\tilde\alpha(t)=\alpha(t)$ for all $t\in [0,1]$
  and $\tilde\alpha(0)=\tilde\chi_0$.
  We take $\alpha(t)=\gamma_*(t)\circ f$ and consider the corresponding
  lift $\tilde\alpha$ (with $\tilde\chi_0$ as in the statement of the lemma).

  It is obvious that every map $\tilde\alpha(t)$ extends to a continuous map
  $\tilde\gamma(t)$ from $X$ to $\CP^1$ holomorphic on $O$.
  It remains to show that the maps $\tilde\gamma(t)$ belong to $\Cc(O)$.
  The following two properties imply this:
  \begin{enumerate}
    \item $\tilde\gamma(t)=\gamma(t)$ on $f^{-1}(P_f)$;
    \item $\tilde\gamma(t)^{-1}(R^{-1}(P_R))\subseteq f^{-1}(P_f)$.
  \end{enumerate}
  Note that both $\gamma(t)$ and $\tilde\gamma(t)$ restricted to $f^{-1}(P_f)$
  take values in $R^{-1}(P_R)$.
  For $\tilde\gamma(t)$, this follows from the defining
  identity $R\circ\tilde\gamma(t)=\gamma(t)\circ f$.
  Now property (1) holds by continuity (the two maps coincide for $t=0$ and take
  values in finite sets).
  To prove property (2), take any $x\in X$ such that $\tilde\gamma(t)(x)\in R^{-1}(P_R)$.
  Then
  $$
  \gamma(t)(f(x))=R\circ\tilde\gamma(t)(x)\in P_R,
  $$
  hence $f(x)\in P_f$, hence $x\in f^{-1}(P_f)$.
\end{proof}

\begin{rem}
  \label{R:lift-contr}
  Note that any lift of a rectifiable path in $\Cc$ is at least
  $E$ times shorter than the path itself.
  This follows from the fact that the map $\Gc$ is a local expansion
  with expansion factor $E$.
\end{rem}

\begin{prop}
There exists a continuous family of
homeomorphisms $\psi_t:\CP^1\to\CP^1$ defined for sufficiently large $t$
with the following properties:
\begin{itemize}
\item
$\psi_t\circ\phi_t\in\Cc$;
\item
$\psi_t\to id$ uniformly as $t\to\infty$;
\item
there is a neighborhood $W$ of $P_R$ such that the restrictions
of $\psi_t$ to $W$ are holomorphic;
\end{itemize}
\end{prop}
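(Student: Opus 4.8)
The plan is to produce the homeomorphisms $\psi_t$ by correcting the maps $\phi_t$ so that they satisfy conditions (1)--(4) defining $\Cc(O)$. The only conditions that can fail for $\phi_t$ itself are the normalization on $f^{-1}(P_f)$ (condition (1), requiring $\phi_t=\iota$ there) and possibly the precise placement of $P_R$-preimages; conditions (2)--(4) are essentially built in because $\phi_t=\Phi_t\circ\pi$ is holomorphic near $P_f$ and the preimages of $P_R$ under $\Phi_t$ coincide with $P$ (recall $P\cap Z=\emptyset$ and the critical values of $\hat f$ off $Z_0$ lie in $P$). So the key point is to move the finite set $\phi_t(f^{-1}(P_f))$ to the fixed finite set $\iota(f^{-1}(P_f))=R^{-1}(P_R)$ by a homeomorphism $\psi_t$ of $\CP^1$ that is close to the identity and holomorphic near $P_R$.

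First I would use Lemma \ref{L:finvP}: the restrictions $\phi_t|_{f^{-1}(P_f)}$ converge to $\iota$, whose image is $R^{-1}(P_R)$. Hence for large $t$ the finite set $A_t=\phi_t(f^{-1}(P_f))$ is $\eps(t)$-close (with $\eps(t)\to 0$) to $A_\infty=R^{-1}(P_R)$ point by point under the natural bijection. I would then invoke a standard ``moving finite sets'' construction on $\CP^1$: given two $n$-tuples of distinct points that are uniformly close, there is a homeomorphism of $\CP^1$, uniformly close to the identity, carrying one tuple to the other, and one may require it to be the identity outside a small neighborhood of the points and to be holomorphic (indeed affine in a chart, or a composition of small translations in B\"ottcher-type coordinates) on an even smaller neighborhood of each point of $A_\infty$. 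This is exactly the kind of construction already used in the proof of Proposition \ref{P:supp} (flow along short paths supported in small neighborhoods); here the paths $t\mapsto\phi_t(x)$, $x\in f^{-1}(P_f)$, play the role of the $\beta_x$, so one even gets a \emph{continuous family} $\psi_t$ automatically, with $\psi_t\to id$ as $t\to\infty$ because the paths converge. Choosing the neighborhood $W$ of $P_R$ small enough (disjoint from $A_\infty-P_R$ in the appropriate sense, using that $R$ is unramified away from $P_R$ so points of $R^{-1}(P_R)\setminus P_R$ are simple preimages) one arranges $\psi_t|_W$ to be holomorphic; note $P_R\subseteq R^{-1}(P_R)=A_\infty$, so one must be slightly careful and make $\psi_t$ holomorphic near \emph{all} of $A_\infty$, in particular near $P_R$.

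Next I would check that $\psi_t\circ\phi_t\in\Cc$ for large $t$. Condition (1): by construction $\psi_t$ carries $\phi_t(f^{-1}(P_f))$ to $R^{-1}(P_R)$ \emph{respecting the labelling}, so $\psi_t\circ\phi_t=\iota$ on $f^{-1}(P_f)$ — here one uses that $\psi_t$ was built to send each individual point $\phi_t(x)$ to $\iota(x)$, which is possible precisely because the perturbation paths are short and disjoint for distinct $x$. Conditions (2) and (3): since $\psi_t$ is a homeomorphism of $\CP^1$ fixing $P_R$ and preserving $R^{-1}(P_R)$ (being the identity on it, in fact), $(\psi_t\circ\phi_t)^{-1}(P_R)=\phi_t^{-1}(P_R)$ and $(\psi_t\circ\phi_t)^{-1}(R^{-1}(P_R))=\phi_t^{-1}(R^{-1}(P_R))$, and these are $\subseteq P_f$ and $\subseteq f^{-1}(P_f)$ respectively because $\phi_t=\Phi_t\circ\pi$ with $\Phi_t^{-1}(P_R)=P$ (no extra preimages appear, as all the relevant critical values lie in $P$ and $\pi$ is injective over $P$). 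Condition (4): near $P_f$, $\phi_t$ is holomorphic and noncritical, and $\psi_t$ is holomorphic near $\phi_t(P_f)\subset$ a neighborhood of $P_R$ contained in $W$, hence the composition is holomorphic and still noncritical near $P_f$ — shrinking $O$ if necessary, so that the union $\Cc$ is the right place for these maps to live.

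\textbf{Main obstacle.} The genuinely delicate point is the third bullet, making the correcting homeomorphisms \emph{holomorphic near all of $P_R$} while simultaneously (i) moving the auxiliary preimage points $\phi_t(x)$, $x\in f^{-1}(P_f)\setminus$(the part sitting over $P_R$), to their targets, and (ii) keeping $\psi_t$ uniformly close to the identity and continuous in $t$. The subtlety is that $P_R\subseteq R^{-1}(P_R)$, so the points one must move accumulate, in general, onto $P_R$, yet near $P_R$ the map must be holomorphic; one resolves this by noting that the points of $f^{-1}(P_f)$ lying over $P_R$ are \emph{already} carried correctly by $\phi_t$ up to a holomorphic germ (both $\xi\circ\phi_t$ and $\xi\circ\iota$ vanish simply, as exploited in the construction of the metric $\Cc$), so near those points no correction — or only a holomorphic one, rescaling a local B\"ottcher coordinate — is needed, while the remaining, genuinely ``generic'' preimages in $R^{-1}(P_R)\setminus P_R$ stay at definite distance from $P_R$ and can be moved by compactly supported homeomorphisms disjoint from $W$. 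Keeping all these local corrections compatible into a single continuous family $\psi_t$ with $\psi_t\to id$ is where the real work lies; everything else is bookkeeping with the definition of $\Cc(O)$.
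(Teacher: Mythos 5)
Your overall plan is the same as the paper's: define $\psi_t$ on the finite set $\phi_t(f^{-1}(P_f))$ by sending $\phi_t(x)\mapsto\iota(x)$, note this is a bijection onto $R^{-1}(P_R)$ that tends to the identity by Lemma \ref{L:finvP}, and then interpolate to a continuous family of homeomorphisms of $\CP^1$, uniformly close to the identity and holomorphic near $P_R$. Your discussion of the ``main obstacle'' (simultaneous holomorphy near $P_R$ and motion of the auxiliary preimages) correctly identifies the point the paper treats briefly.

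However, your verification of conditions (2) and (3) of membership in $\Cc$ rests on a false premise. You write that ``$\psi_t$ is a homeomorphism of $\CP^1$ fixing $P_R$ and preserving $R^{-1}(P_R)$ (being the identity on it, in fact),'' and then cancel $\psi_t$ to reduce to $\phi_t^{-1}(P_R)$. But by your own construction $\psi_t$ moves each $\phi_t(x)$, $x\in f^{-1}(P_f)$, to $\iota(x)$; in particular it moves $P_t=\phi_t(P_f)$ onto $P_R$, and since $P_t\ne P_R$ for finite $t$, $\psi_t$ is \emph{not} the identity on $P_R$ or on $R^{-1}(P_R)$. Consequently $(\psi_t\circ\phi_t)^{-1}(P_R)$ is \emph{not} $\phi_t^{-1}(P_R)$ (which will typically be empty, since $P_R$ need not be in the range of $\phi_t$). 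The correct computation, which is what the paper does, is a cardinality argument: $\psi_t$ maps $P_t$ bijectively onto $P_R$, and $|P_t|=|P_R|$, so $\psi_t^{-1}(P_R)=P_t$; therefore $(\psi_t\circ\phi_t)^{-1}(P_R)=\phi_t^{-1}(P_t)=P_f$, and likewise $(\psi_t\circ\phi_t)^{-1}(R^{-1}(P_R))=\phi_t^{-1}(R_{t-1}^{-1}(P_{t-1}))=f^{-1}(P_f)$. Your claim ``$\Phi_t^{-1}(P_R)=P$'' is the same error one level down: $\Phi_t$ sends $P$ to $P_t$, not to $P_R$. This is a local but genuine gap — easily fixed by replacing the ``identity near $P_R$'' claim with the cardinality argument above.
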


Note that a priori we cannot fix a neighborhood $O$ of $P_f$ such that
$\psi_t\circ\phi_t\in\Cc(O)$ for all $t$.
However, as can be seen from the proof, such neighborhood exists for
every bounded interval of values of $t$.

\begin{proof}
 Suppose that $t$ is sufficiently large so that $\phi_t(x)\ne\phi_t(x')$ for
$x$, $x'\in f^{-1}(P_f)$ unless $\iota(x)=\iota(x')$.
Recall that $P_t=\phi_t(P_f)$.
For $z\in R^{-1}_{t-1}(P_{t-1})$, we set $\psi_t(z)=\iota(x)$, where
$x\in f^{-1}(P_f)$ is any point such that $\phi_t(x)=z$.
By our assumption, the point $\psi_t(z)$ thus defined does not depend on the choice of $x$.
We have defined the map $\psi_t$ on $R^{-1}_{t-1}(P_{t-1})$.
It is clear that the map $\psi_t$ is injective on $R^{-1}_{t-1}(P_{t-1})$
(different points of $R^{-1}_{t-1}(P_{t-1})$ do not merge in the limit).
Since the sets $R^{-1}_{t-1}(P_{t-1})$ and $R^{-1}(P_R)$ have the same
cardinality, this map is actually a bijection between these two sets.

The uniform distance between the map $\psi_t$ on $R^{-1}_{t-1}(P_{t-1})$ and
the identity is bounded above by the uniform distance between $\phi_t$
on $f^{-1}(P_f)$ and $\iota$.
Hence this distance tends to $0$ as $t\to\infty$.
It follows that we can choose a continuous family $\psi_t$ so that
$\psi_t\to id$ as $t\to\infty$ and that $\psi_t$ are holomorphic on some
neighborhood of $P_R$ for all sufficiently large $t$.

It suffices to prove that $\psi_t\circ\phi_t$ belongs to $\Cc$.
By definition $\psi_t\circ\phi_t=\iota$ on $f^{-1}(P_f)$.
We need to check that:
\begin{enumerate}
\item $(\psi_t\circ\phi_t)^{-1}(P_R)\subseteq P_f$;
\item $(\psi_t\circ\phi_t)^{-1}(R^{-1}(P_R))\subseteq f^{-1}(P_f)$.
\end{enumerate}

To prove (1), suppose that $w=\psi_t\circ\phi_t(x)\in P_R$ for some $x\in X$.
Since $\psi_t$ is a homeomorphism taking $P_t$ to $P_R$,
and $P_R$ has the same cardinality as $P_t$,
we have $\psi_t^{-1}(w)\in P_t$.
Then $x\in\phi_t^{-1}(P_t)=P_f$.
Property (2) can be proved by the same argument.
\end{proof}

Theorem \ref{T:unif} (and hence the Main Theorem) reduces to the following:

\begin{thm}
\label{T:conv}
  The sequence $\chi_n=\psi_n\circ\phi_n$ converges in $\Cc$ as $n\to\infty$
through positive integers,
  hence in $C(X-P_f,Y)$ and in $C(X,\CP^1)$.
\end{thm}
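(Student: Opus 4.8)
The plan is to exhibit the sequence $\chi_n$ as a Cauchy sequence in the complete metric space $\Cc$, by comparing consecutive terms and using the contraction from Remark \ref{R:lift-contr}. The starting observation is that each $\chi_{n+1}$ is obtained from $\chi_n$ by a lifting operation: from the commuting diagram $(*)$ and the relation $\psi_t\circ\phi_t\in\Cc$, one checks that $\chi_{n+1}$ satisfies $R_n\circ(\psi_{n+1}\circ\phi_{n+1})=(\psi_n\circ\phi_n)\circ f$ up to the discrepancy between $R_n$ and the true limit $R$ and between the normalizing homeomorphisms $\psi_n$ and $\psi_{n+1}$. So the first step is to make this precise: define an auxiliary operator $F:\Cc\to\Cc$ that sends $\chi$ to the appropriate branch of $R^{-1}\circ\chi\circ f$ (using the lifting property of $R:\tilde Y\to Y$, Lemma \ref{L:lifting}, applied to constant paths, to pin down the branch via the value on $f^{-1}(P_f)$, which is forced to be $\iota$). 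By Remark \ref{R:lift-contr}, $F$ contracts distances in $\Cc$ by the factor $E>1$. One then wants $\chi_{n+1}=F(\chi_n)$ exactly; since the genuine relation involves $R_n$ rather than $R$ and $\psi_{n+1}$ rather than $\psi_n$, this will hold only up to an error $\delta_n$ that I must control.

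The second step is therefore to estimate the error term. I would write $\chi_{n+1}=F(\chi_n)\ast(\text{small correction})$, where the correction measures (i) the uniform distance between the rational maps $R_n$ and $R$ — which tends to $0$ by Proposition \ref{P:R_n-conv} — and (ii) the uniform distance between $\psi_n$ and $\psi_{n+1}$, which tends to $0$ because $\psi_t\to id$ is a continuous family. Both contributions are measured in the metric $g$ on $Y$, and one needs a local argument near $P_R$ to see the distances stay finite and small there — exactly the kind of estimate, using the logarithmic coordinate $|d\log\xi|$, that was carried out in Section 4 to show the $\Cc$-metric is finite. The upshot should be: $d_{\Cc}(\chi_{n+1},F(\chi_n))\le\delta_n$ with $\delta_n\to 0$.

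The third step is the standard contraction-with-vanishing-perturbation argument. From $d_{\Cc}(\chi_{n+1},F(\chi_n))\le\delta_n$ and the $1/E$-Lipschitz property of $F$ one gets
$$
d_{\Cc}(\chi_{n+1},\chi_n)\le \tfrac1E\, d_{\Cc}(\chi_n,\chi_{n-1})+\delta_n+\tfrac1E\delta_{n-1},
$$
and since $\delta_n\to 0$ and $1/E<1$ a routine induction/summation shows $d_{\Cc}(\chi_{n+1},\chi_n)\to 0$ geometrically fast modulo a term that also goes to $0$, hence $\sum_n d_{\Cc}(\chi_{n+1},\chi_n)<\infty$ provided $\delta_n$ itself is summable; if $\delta_n$ is merely $o(1)$ one still gets $d_{\Cc}(\chi_n,\chi_m)\to 0$ by the usual two-sided estimate, so $(\chi_n)$ is Cauchy. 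Completeness of $\Cc$ (it is an inductive limit of the complete spaces $\Cc(O)$, and — as noted after Proposition on $\psi_t$ — for any bounded range of $n$ one single $O$ works, so the tail of the sequence lives in one $\Cc(O)$) yields a limit $\chi_\infty\in\Cc$. Convergence in $C(X-P_f,Y)$ is immediate from the definition of the $\Cc$-metric, and convergence in $C(X,\CP^1)$ follows because near $P_f$ the maps are holomorphic with simple zeros in the coordinate $\xi$, so uniform smallness of $|\log(\xi\circ\chi_n/\xi\circ\chi_\infty)|$ forces uniform convergence of the $\CP^1$-valued maps including at $P_f$.

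The main obstacle I expect is step two: controlling the error $\delta_n$ \emph{in the metric $g$}, which blows up near $P_R$. One must check that the perturbations $R_n\to R$ and $\psi_n\to\psi_{n+1}$ respect the behavior $\sim|d\xi|/|\xi|$ near each postcritical point well enough that the $g$-distances they induce are not only finite but tend to $0$; this is where the specific properties of $\phi_n$ listed in the remark after Theorem \ref{T:unif} — that the $\phi_n|_{P_f}$ converge and that the $\phi_n$ are holomorphic near $P_f$ with $P_f$ not critical — get used, together with the $\psi_t$-construction which was designed precisely so that $\psi_t\circ\phi_t$ has simple zeros at $P_f$ in the good coordinate. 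A secondary subtlety is keeping track of which branch of $R^{-1}$ is selected as $n$ varies; continuity of the family $\phi_t$ (established in Section 3) ensures the branch does not jump, so $F$ is genuinely well defined and continuous on a neighborhood of the tail of $(\chi_n)$ in $\Cc$.
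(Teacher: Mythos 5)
Your plan correctly identifies the driving mechanism — the lift $\chi\mapsto R^{-1}\circ\chi\circ f$ is a uniform contraction with factor $E^{-1}$, and the discrepancy between $\chi_{n+1}$ and the exact lift $\tilde\chi_{n+1}$ of $\chi_n$ tends to $0$ — and your ``contraction with vanishing perturbation'' estimate does yield $d_\Cc(\chi_n,\chi_{n+1})\to 0$, which matches the paper's recursion $L_n\le E^{-1}L_{n-1}+2\eps_n$ in substance. However, there is a genuine gap at the final step: you conclude convergence by invoking completeness of $\Cc$, but the paper explicitly states, right after Theorem~\ref{T:conv}, that ``the space $\Cc$ is not complete,'' and it is not. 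Your sub-claims are also both false: the spaces $\Cc(O)$ are not complete (a uniform limit can violate property~(3), i.e.\ develop a preimage of $R^{-1}(P_R)\cap Y$ outside $f^{-1}(P_f)$, or violate property~(4) by acquiring a critical point on $P_f$), and the tail of $(\chi_n)$ does \emph{not} lie in a single $\Cc(O)$ — the remark after the proposition on $\psi_t$ says precisely that one can only fix $O$ over \emph{bounded} ranges of $t$, not over $[t_0,\infty)$. So Cauchyness alone does not produce a limit in $\Cc$.

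The paper's proof is specifically engineered to avoid completeness. After showing $L_n\to 0$, it does not pass to a Cauchy limit; instead it constructs the candidate limit by hand via Lemma~\ref{L:ext}, which extends a rectifiable path $\delta:[0,1]\to\Cc(O)$ to $[0,\infty]$ by iterated lifting and shows that the endpoint $\delta(\infty)$ lies in $\Cc(O)$ and satisfies the exact conjugacy $R\circ\delta(\infty)=\delta(\infty)\circ f$. (The argument that $\delta(\infty)(X-f^{-1}(P_f))\subseteq\tilde Y$ — i.e.\ that property~(3) survives the limit — uses the uniqueness of path lifting, not completeness.) Then Lemma~\ref{L:not-too-close} shows that two solutions of $R\circ\chi=\chi\circ f$ in $\Cc$ are never $\eps$-close unless equal; combined with $d(\delta_n(\infty),\delta_m(\infty))\to 0$ this forces the sequence $\delta_n(\infty)$ to stabilize at a single $\chi_\infty$, which is then the limit of $\chi_n$. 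You would need these two lemmas (or a substitute) to close your argument. A second, more technical issue: the paper never defines a global operator $F:\Cc\to\Cc$. The lifting in Lemma~\ref{L:lifting} is a \emph{path} lifting, and the paper controls the monodromy by tracking homotopy classes of paths (``homotopic to $t\mapsto\chi_{n-1+t}$'') rather than by pinning down a branch on $f^{-1}(P_f)$ and applying it to ``constant paths'' as you suggest. Your ``constant path'' trick presupposes that for an arbitrary $\chi\in\Cc$ a lift with the correct value on $f^{-1}(P_f)$ exists, which is a nontrivial obstruction-theoretic condition on $(\chi\circ f)_*\pi_1(X-f^{-1}(P_f))$; the paper sidesteps this by only ever lifting along paths that start from a known liftable point.
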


Indeed, since $\psi_n$ converge to the identity,
we conclude that $\phi_n$ converge uniformly, q.e.d.
The convergence in $\Cc$ is perhaps a little surprising because
the space $\Cc$ is not complete.

\section{Contracting lifting}

In this section, we prove Theorem \ref{T:conv}, hence also the Main Theorem.
We will repeatedly use contraction properties of the lifting as
defined in Lemma \ref{L:lifting}.
Note that the map $\chi_t=\psi_t\circ\phi_t\in\Cc$ depends continuously on $t$ 
with respect to the uniform metric in $C(X,\CP^1)$.
It can also be arranged that, for every finite interval $[t_0,t_1]$,
there exists a neighborhood $O$ of $P_f$ such that $\chi_t\in\Cc(O)$
for all $t\in [t_0,t_1]$. 
Hence $\chi_t$ form also a continuous family in $\Cc$.

Recall that we have the following commutative diagram:
 $$
  \begin{CD}
  (X,f^{-1}(P_f)) @>f>> (X,P_f)\\
  @V \phi_{t+1}VV @VV \phi_t V\\
  (\CP^1,R_t^{-1}(P_t)) @>R_t>> (\CP^1,P_t)\\
  @V \psi_{t+1}VV @VV \psi_t V\\
  (\CP^1,R^{-1}(P_R)) @. (\CP^1,P_R)
  \end{CD}
 $$

The family $\psi_t$ can be thought of as a continuous path in $C(\CP^1,\CP^1)$ defined on
the compact interval $[0,\infty]$: it suffices to set $\psi_\infty=id$.
There exists a unqiue continuous path $t\mapsto\tilde\psi_t$, $t\in [1,\infty]$ such that
$\tilde\psi_\infty=id$ and $R\circ\tilde\psi_t=\psi_{t-1}\circ R_{t-1}$ for all $t\ge 1$.
We have $\tilde\psi_t=\psi_t$ on $R^{-1}_{t-1}(P_{t-1})$ by continuity,
since both maps are equal to the identity for $t=\infty$,
and both take values in $R^{-1}(P_R)$.
Since $\psi_t\circ\phi_t\in\Cc$ and $\tilde\psi_t=\psi_t$ on $R^{-1}_{t-1}(P_{t-1})$,
we also have $\tilde\chi_t=\tilde\psi_t\circ\phi_t\in\Cc$.
Note that the family $\tilde\chi_t$ satisfies the following identity:
$$
R\circ\tilde\chi_{t+1}=\chi_t\circ f.
$$
Indeed, we have
$$
R\circ\tilde\psi_{t+1}\circ\phi_t=\psi_{t}\circ R_{t}\circ\phi_{t+1}=\psi_{t}\circ\phi_t\circ f.
$$

Note that both $\psi_t$ and $\tilde\psi_t$ converge to the identity
as $t\to\infty$ uniformly with respect to the spherical metric.
Therefore, the distance between $\chi_t$ and $\tilde\chi_t$
in $C(X,\CP^1)$ tends to 0 as $t\to\infty$.

\begin{prop}
\label{P:Gamma}
There is a continuous map $\Gamma:[t_0,\infty)\times[0,1]\to\Cc$,
where $t_0$ is a sufficiently large real number,
such that
$$
\Gamma(t,0)=\chi_t,\quad \Gamma(t,1)=\tilde\chi_t,
$$
and the length of the path $s\mapsto \Gamma(t,s)$, $s\in [0,1]$
in $\Cc$ tends to $0$ as $t\to\infty$.
Moreover, for every $t\in [t_0,\infty)$, there exists a neighborhood
$O_t$ of $P_f$ such that $\Gamma(t,s)\in\Cc(O_t)$ for all $s\in [0,1]$.
For every finite interval $[t_1,t_2]$, there exists a neighborhood $O$ of $P_f$
that is contained in $O_t$ for all $t\in [t_1,t_2]$. 
\end{prop}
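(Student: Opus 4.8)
The plan is to build the homotopy $\Gamma$ fiberwise over the parameter $t$ by connecting $\chi_t$ to $\tilde\chi_t$ through a path in $\Cc$ whose length we can control, and then to verify continuity in $t$ and the uniformity of the neighborhoods $O_t$. The basic observation is that $\chi_t=\psi_t\circ\phi_t$ and $\tilde\chi_t=\tilde\psi_t\circ\phi_t$ differ only by post-composition with the homeomorphisms $\psi_t$ and $\tilde\psi_t$, both of which converge uniformly to the identity and both of which are holomorphic on a fixed neighborhood $W$ of $P_R$ for $t$ large, and which agree on $R^{-1}_{t-1}(P_{t-1})$. So the first step is to produce, for each large $t$, a continuous family of homeomorphisms $\tau_{t,s}:\CP^1\to\CP^1$, $s\in[0,1]$, with $\tau_{t,0}=\psi_t$, $\tau_{t,1}=\tilde\psi_t$, each $\tau_{t,s}$ holomorphic on $W$, each fixing $R^{-1}(P_R)$ pointwise (equivalently carrying $R^{-1}_{t-1}(P_{t-1})$ to $R^{-1}(P_R)$ in the $s$-independent way dictated by the endpoints), and with $\sup_s d_{\mathrm{sph}}(\tau_{t,s},\mathrm{id})\to 0$ as $t\to\infty$. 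Since $\psi_t$ and $\tilde\psi_t$ are both uniformly close to the identity and holomorphic near $P_R$, such an interpolation can be obtained by a straightforward convex-combination or Beltrami-coefficient interpolation argument in a chart, bending it to be holomorphic on $W$; one then sets $\Gamma(t,s)=\tau_{t,s}\circ\phi_t$.

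The second step is to check that $\Gamma(t,s)\in\Cc$ for every $(t,s)$ and to estimate the length of the path $s\mapsto\Gamma(t,s)$ in the metric of $\Cc$. Membership in $\Cc(O_t)$ for a suitable $O_t$ follows by exactly the argument already used for $\psi_t\circ\phi_t$ and $\tilde\psi_t\circ\phi_t$: the conditions $(\Gamma(t,s))^{-1}(P_R)\subseteq P_f$ and $(\Gamma(t,s))^{-1}(R^{-1}(P_R))\subseteq f^{-1}(P_f)$ hold because $\tau_{t,s}$ is a homeomorphism carrying $P_t$ bijectively to $P_R$ and $R^{-1}_{t-1}(P_{t-1})$ bijectively to $R^{-1}(P_R)$, so these preimage conditions are inherited from the corresponding properties of $\phi_t$; condition $\Gamma(t,s)=\iota$ on $f^{-1}(P_f)$ holds because $\tau_{t,s}\circ\phi_t$ agrees with $\psi_t\circ\phi_t=\iota$ there (both $\psi_t$ and $\tilde\psi_t$ agree on $R^{-1}_{t-1}(P_{t-1})$, and we arranged $\tau_{t,s}$ to interpolate within that constraint); and holomorphy near $P_f$ follows since $\phi_t$ is holomorphic on $O$ and $\tau_{t,s}$ is holomorphic on $W\supseteq\Gamma(t,s)(O_t)$ after shrinking $O_t$. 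For the length estimate, the $\Cc$-distance between $\Gamma(t,s)$ and $\Gamma(t,s')$ is measured in the metric $g$ on $Y=\CP^1-P_R$; since all the maps $\tau_{t,s}$ are uniformly $C^1$-close to the identity on the region where $\phi_t$ takes its values (the image stays in a fixed compact part of $Y$ away from $P_R$, except near $P_f$ where the logarithmic local model from the definition of $\Cc$ applies), the $g$-length of $s\mapsto\tau_{t,s}\circ\phi_t$ is bounded by a constant times $\sup_s \|\tau_{t,s}-\mathrm{id}\|_{C^1}$, which goes to $0$; near $P_f$ one uses the explicit formula $g=\mathrm{const}\cdot|d\log\xi|$ and the fact that $\tau_{t,s}$ is holomorphic there and converges to the identity in $C^1$ near $P_R$, so the logarithmic derivative $\partial_s\log(\xi\circ\tau_{t,s}\circ\phi_t)$ is uniformly small.

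The third step is continuity of $\Gamma$ as a map into $\Cc$ and the statement that on a finite interval $[t_1,t_2]$ one neighborhood $O$ works for all $t$. Joint continuity in $(t,s)$ with respect to the uniform metric on $C(X,\CP^1)$ is immediate from continuity of $t\mapsto\phi_t$, $t\mapsto\psi_t$, $t\mapsto\tilde\psi_t$ and of the interpolation $\tau_{t,s}$; and we observed earlier that on $\Cc(O)$ the $\Cc$-topology coincides with the topology induced from $C(X,\CP^1)$, so continuity into $\Cc$ follows once we know all values lie in a common $\Cc(O)$ over the relevant range. For a finite interval $[t_1,t_2]$, the family $\{\phi_t : t\in[t_1,t_2]\}$ is a compact family of holomorphic maps near $P_f$ with no critical points on $P_f$, hence there is a single neighborhood $O$ of $P_f$ on which every $\phi_t$, $t\in[t_1,t_2]$, is holomorphic and injective near $P_f$; combining this with the fixed holomorphy neighborhood $W$ for the $\tau_{t,s}$ gives a single $O$ (possibly shrunk) with $\Gamma(t,s)\in\Cc(O)$ for all $(t,s)\in[t_1,t_2]\times[0,1]$.

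The main obstacle I expect is the length estimate near the postcritical set: one must control not just the $C^0$ size of $\tau_{t,s}-\mathrm{id}$ but the size of the logarithmic derivative that governs the $g$-metric near $P_f$, i.e. one needs the interpolating homeomorphisms to be holomorphic on a \emph{fixed} neighborhood $W$ of $P_R$ and to converge to the identity there in a strong enough ($C^1$ near each point of $P_R$, equivalently with derivative at the points of $P_R$ tending to $1$) sense, and to arrange this compatibly with the boundary constraint that $\tau_{t,s}$ match the prescribed bijection on $R^{-1}_{t-1}(P_{t-1})$. Producing such a well-behaved interpolation — and not merely a $C^0$ one — is the delicate point; everything else is a repetition of arguments already carried out for $\psi_t$ and $\tilde\psi_t$.
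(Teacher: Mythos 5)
Your overall strategy is the same as the paper's: reduce the problem to interpolating between $\psi_t$ and $\tilde\psi_t$ through maps holomorphic on a fixed neighborhood of $P_R$, and then set $\Gamma(t,s)=\Psi(t,s)\circ\phi_t$. The verification of membership in $\Cc(O_t)$, the continuity in $(t,s)$, and the uniformity of $O$ over finite $t$-intervals are handled as you describe. However, there is a genuine gap at exactly the point you flag as ``the delicate point'': you never actually construct the interpolation with controlled length in the metric $g=\mathrm{const}\cdot|d\log\xi|$ near $P_R$. Invoking ``a straightforward convex-combination or Beltrami-coefficient interpolation \dots bending it to be holomorphic on $W$'' is not an argument, and the route you then propose --- requiring $C^1$-convergence of the interpolating homeomorphisms to the identity, with derivatives at $P_R$ tending to $1$ --- is both unestablished (nothing in the construction of $\psi_t$ gives derivative control) and unnecessary.

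The paper closes this gap with purely $C^0$ information plus holomorphy. Since $\xi\circ\psi_t$ and $\xi\circ\tilde\psi_t$ have simple zeros at the \emph{same} point $z_t=\phi_t(x)$, the ratio $\xi\circ\tilde\psi_t/\xi\circ\psi_t$ extends holomorphically to the whole component $W_x$ and converges to $1$ uniformly on $W_x$ as $t\to\infty$ (uniform convergence away from $z_t$ follows from $\psi_t,\tilde\psi_t\to\mathrm{id}$, and the maximum principle propagates it across the zero). One then interpolates \emph{multiplicatively}:
$$
\xi\circ\Psi(t,s)=(\xi\circ\psi_t)\cdot\exp\left(s\log\frac{\xi\circ\tilde\psi_t}{\xi\circ\psi_t}\right),
$$
with the branch of $\log$ closest to $0$. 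In the metric $|d\log\xi|$ this path is essentially a geodesic segment at each point: its length in $C(\tilde W_x-\{z_t\},Y)$ equals the distance between $\psi_t$ and $\tilde\psi_t$ there, namely $\sup|\log(\xi\circ\tilde\psi_t/\xi\circ\psi_t)|$, which tends to $0$. No derivative of $\tau_{t,s}$ in the space variable ever enters; what must be controlled is $\partial_s\log(\xi\circ\Psi(t,s))$, and the explicit formula makes this constant in $s$ and small. Your proposal conflates these two derivatives, which is why you end up demanding a hypothesis you cannot verify.
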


\begin{proof}
It suffices to define a continuous map $\Psi:[t_0,\infty)\times[0,1]\to C(\CP^1,\CP^1)$
such that $\Psi(t,0)=\psi_t$, $\Psi(t,1)=\tilde\psi_t$, and $s\mapsto\Psi(t,s)$ is a rectifiable path in
$C(\CP^1-P_t,Y)$, whose length tends to 0 as $t\to\infty$, and such that all
$\Psi(t,s)$ are holomorphic on some fixed neighborhood $\tilde W$ of $P_R$.
Then we set $\Gamma(t,s)=\Psi(t,s)\circ\phi_t$.

Fix $x\in P_f$, and set $z_t=\phi_t(x)$.
Let $W_x$ be the component of $W$ containing $\iota(x)$, and
$\xi$ a holomorphic coordinate on $W_x$ such that $\xi(\iota(x))=0$.
We have $z_t\in W_x$ for all sufficiently large $t$, and $\xi(z_t)\to 0$ as $t\to\infty$.
Since both $\xi\circ\psi_t(z)$ and $\xi\circ\tilde\psi_t(z)$ have simple
zeros at $z_t$, the ratio $\xi\circ\tilde\psi_t/\xi\circ\psi_t$ extends holomorphically to $W_x$
and converges to 1 on $W_x$ as $t\to\infty$.
It follows that the distance between $\psi_t$ and $\tilde\psi_t$ in 
$C(W_x-\{z_t\},Y)$ tends to zero as $t\to\infty$.
We set
$$
\xi\circ\Psi(t,s)=(\xi\circ\psi_t)\cdot
\exp\left(s\log\frac{\xi\circ\tilde\psi_t}{\xi\circ\psi_t}\right),
$$
on a neighborhood $\tilde W_{x}$ of $\iota(x)$ such that the right-hand side lies in $\xi(W_x)$
(we can choose one neighborhood that will work for all sufficiently large $t$). 
The branch of the logarithm is chosen to be the closest to 0.
This defines the maps $\Psi(t,s)$ on $\tilde W_{x}$.
Note that the path $s\mapsto\Psi(t,s)$ is rectifiable in $C(\tilde W_{x}-\{z_t\},Y)$,
and the length of the path is equal to the distance between $\psi_t$ and
$\tilde\psi_t$ in $C(\tilde W_{x}-\{z_t\},Y)$. 
The same formula defines the maps $\Psi(t,s)$ on a neighborhood of any point from $P_t$. 
Clearly, we can extend $\Psi(t,s)$ to $\CP^1$ with desired properties.
\end{proof}

In the proof of Theorem \ref{T:conv}, we will need two more lemmas.

\begin{lem}
  \label{L:ext}
  Consider a rectifiable path $\delta:[0,1]\to\Cc(O)$.
  Then there is a continuous extension $\delta:[0,\infty]\to\Cc(O)$
  such that $R\circ\delta(t+1)=\delta(t)\circ f$ for all $t\in [0,\infty)$.
  Moreover, the length of the extended path $\delta$ is at most $E/(E-1)$ times
  the length of the original $\delta$, and we have
  $$
  R\circ\delta(\infty)=\delta(\infty)\circ f.
  $$
\end{lem}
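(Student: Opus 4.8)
The plan is to build the extension by iterating the lifting of Lemma~\ref{L:lifting}. For $s\in[0,1]$ and an integer $n\ge 0$ write $\delta_n(s)=\delta(n+s)$; then the required identity $R\circ\delta(t+1)=\delta(t)\circ f$ says precisely that each $\delta_{n+1}$ is a lift of $\delta_n$ in the sense of Lemma~\ref{L:lifting}. I would construct the $\delta_n$ by induction from the given $\delta_0=\delta|_{[0,1]}$. Suppose $\delta_0,\dots,\delta_n$ have been defined with $R\circ\delta_k(s)=\delta_{k-1}(s)\circ f$ for all $s$ and all $1\le k\le n$, and with $\delta_k(0)=\delta_{k-1}(1)$ (so that the concatenation is continuous at the integer $k$). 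Evaluating the identity for $\delta_n$ at $s=1$ gives $R\circ\delta_n(1)=\delta_{n-1}(1)\circ f=\delta_n(0)\circ f$, so Lemma~\ref{L:lifting} applies to the path $\delta_n$ with initial map $\delta_n(1)$ and yields a continuous $\delta_{n+1}\colon[0,1]\to\Cc(O)$ with $\delta_{n+1}(0)=\delta_n(1)$ and $R\circ\delta_{n+1}(s)=\delta_n(s)\circ f$. (The very first step requires $R\circ\delta(1)=\delta(0)\circ f$; this is part of the setting in which the lemma is used, and is in any case forced by the asserted conclusion.) Concatenating the $\delta_n$ produces a continuous path $\delta\colon[0,\infty)\to\Cc(O)$ with $R\circ\delta(t+1)=\delta(t)\circ f$ for every $t\ge0$.

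Next I would estimate the length. By Remark~\ref{R:lift-contr}, a lift of a rectifiable path in $\Cc$ is at least $E$ times shorter than the path; hence, inductively, $\delta_n$ is rectifiable of length at most $E^{-n}$ times the length $L$ of $\delta_0$. So the extended path has length at most $L\sum_{n\ge0}E^{-n}=\frac{E}{E-1}L<\infty$, which is the bound claimed.

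The crucial step is to exhibit $\delta(\infty)$ and check it lies in $\Cc(O)$. Finiteness of the total length makes the tail $\delta|_{[t,\infty)}$ have length tending to $0$, so $\{\delta(t)\}$ is Cauchy as $t\to\infty$ in the metric of $\Cc(O)$, i.e. uniformly Cauchy on $X-P_f$ with respect to the metric $g$ on $Y$. Since $(Y,g)$ is complete, $C(X-P_f,Y)$ with the uniform $g$-metric is complete, so the $\delta(t)$ converge uniformly on $X-P_f$ to a continuous map $\delta(\infty)\colon X-P_f\to Y$; in particular $\delta(\infty)^{-1}(P_R)\subseteq P_f$. Passing to the limit in $R\circ\delta(t+1)=\delta(t)\circ f$ gives $R\circ\delta(\infty)=\delta(\infty)\circ f$, whence $\delta(\infty)^{-1}(R^{-1}(P_R))\subseteq f^{-1}(P_f)$, and also lets us extend $\delta(\infty)$ to a continuous map $X\to\CP^1$, holomorphic on $O-P_f$ as a locally uniform limit of holomorphic maps. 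For holomorphy at the points of $P_f$ and the absence of critical points there, I would work locally: near $x_0\in P_f$ each $\delta(t)$ is holomorphic with $\xi\circ\delta(t)$ having a simple zero at $x_0$ (in a coordinate $\xi$ at $\iota(x_0)$ with $\xi(\iota(x_0))=0$), and uniform $g$-Cauchyness forces the ratios $\xi\circ\delta(t)/\xi\circ\delta(t')$ to tend to $1$ uniformly there, so $\xi\circ\delta(t)$ converges uniformly near $x_0$ to a holomorphic function with a simple zero. Together with $\delta(\infty)=\iota$ on $f^{-1}(P_f)$ (the limit of the identities $\delta(t)=\iota$), this verifies the four defining properties of $\Cc(O)$, so $\delta(\infty)\in\Cc(O)$; and $d(\delta(t),\delta(\infty))$ being bounded by the tail length, $t\mapsto\delta(t)$ is continuous at $\infty$.

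The routine parts are the iterated lifting and the geometric length bound, both immediate from Lemma~\ref{L:lifting} and Remark~\ref{R:lift-contr}. The genuinely delicate point, where I expect the real work to lie, is the last paragraph: producing a limit that stays in the \emph{incomplete} space $\Cc(O)$. This succeeds only because the $\Cc(O)$-metric is the restriction of the complete uniform $g$-metric on $C(X-P_f,Y)$, so the limit automatically maps $X-P_f$ into $Y$, and because the $\delta(t)$ are uniformly holomorphic near $P_f$ with simple zeros over $P_R$, which prevents a postcritical point from degenerating into a critical point of $\delta(\infty)$.
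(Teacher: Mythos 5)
Your proof is correct and follows essentially the same route as the paper: iterated application of Lemma \ref{L:lifting}, the geometric-series length bound via Remark \ref{R:lift-contr}, and completeness of $C(X-P_f,Y)$ (from properness of $g$) to produce $\delta(\infty)$, with the local $\log(\xi\circ\chi/\xi\circ\chi^*)$ computation guaranteeing a simple zero over each point of $P_R$. The only divergence is in the step the paper calls the ``only non-obvious property,'' namely $\delta(\infty)^{-1}(R^{-1}(P_R))\subseteq f^{-1}(P_f)$: the paper gets it from uniqueness of the lift of the extended path over $[0,\infty]$, whereas you first pass to the limit to obtain $R\circ\delta(\infty)=\delta(\infty)\circ f$ and then deduce the inclusion from $\delta(\infty)^{-1}(P_R)\subseteq P_f$ exactly as in the proof of Lemma \ref{L:lifting}; both arguments are valid, and yours is slightly more direct. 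You are also right to flag the implicit hypothesis $R\circ\delta(1)=\delta(0)\circ f$, which the lemma's conclusion forces and which holds in the situation where the lemma is applied.
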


\begin{proof}
  Consider the lift $\alpha$ of the path $\delta$ as in Lemma \ref{L:lifting}.
  By Remark \ref{R:lift-contr}, the length of $\alpha$ is at most $E^{-1}$ times the length of $\delta$.
  Now we set $\delta(t)=\alpha(t-1)$ for $t\in [1,2]$.
  As we keep doing this extension process, we obtain more and more segments of
  $\delta$, each segment being shorter than the preceding one by at least the
  factor $E^{-1}$.
  It follows that $\delta(t)$ converges in $C(X-P_f,Y)$ as $t\to\infty$.
  Denote the limit by $\delta(\infty)$.
  Thus we obtain the extended path $\delta:[0,\infty]\to C(X-P_f,Y)$.
  The length of this extended path can be estimated by a geometric series with the common ratio $E^{-1}$:
  it does not exceed $E/(E-1)$.

  It remains to prove that $\delta(\infty)\in\Cc(O)$.
The map $\delta(\infty)$ is holomorphic on $O$ as a uniform limit of holomorphic maps.
  The only non-obvious property is that the preimage of $R^{-1}(P_R)$ under
  $\delta(\infty)$ is contained in $f^{-1}(P_f)$, or, equivalently,
  the image of $X-f^{-1}(P_f)$ under $\delta(\infty)$ is contained in $\tilde Y$.
  Indeed, the lift $\tilde\delta$
  of the path $t\mapsto \delta(t)$, $t\in [0,\infty]$,
  such that $\tilde\delta(0)=\delta(1)$ is unique.
  Therefore, we must have $\tilde\delta(t)=\delta(t+1)$ on $X-f^{-1}(P_f)$ for all $t\in [0,\infty]$,
in particular, $\tilde\delta(\infty)=\delta(\infty)$.
  On the other hand, by construction, the map $\tilde\delta(\infty)$ takes the 
set $X-f^{-1}(P_f)$ to $\tilde Y$.
Therefore, we have
  $\delta(\infty)(X-f^{-1}(P_f))\subseteq\tilde Y$, as desired.
  The equality $R\circ\delta(\infty)=\delta(\infty)\circ f$ follows from the equality
  $R\circ\tilde\delta(\infty)=\delta(\infty)\circ f$ on $X-f^{-1}(P_f)$.
\end{proof}

\begin{lem}
\label{L:not-too-close}
 There exists a real number $\eps>0$ such that the distance between 
two maps $\chi^*$, $\chi^{**}\in\Cc$ is bigger than $\eps$ provided that 
$$
R\circ\chi^*=\chi^*\circ f,\quad R\circ\chi^{**}=\chi^{**}\circ f.
$$
\end{lem}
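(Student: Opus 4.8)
The plan is a contraction argument by contradiction: suppose there are distinct $\chi^*,\chi^{**}\in\Cc$ with $R\circ\chi^*=\chi^*\circ f$, $R\circ\chi^{**}=\chi^{**}\circ f$, and distance $d$ in $\Cc$ as small as we wish, and let us produce a positive lower bound for $d$ depending only on the data. The open set $S=\{x:\chi^*(x)\ne\chi^{**}(x)\}$ is nonempty and backward invariant ($f^{-1}(S)\subseteq S$, since applying $R$ turns $\chi^*(f(x))\ne\chi^{**}(f(x))$ into $\chi^*(x)\ne\chi^{**}(x)$); note $S$ is disjoint from $f^{-1}(P_f)$, where both maps equal $\iota$. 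First I would analyse the situation near $P_f$. At $x_0\in P_f$ both maps are holomorphic and non-singular with value $\iota(x_0)\in P_R$, so in a B\"ottcher coordinate $\xi$ at $\iota(x_0)$ (with $\xi(R(z))=\xi(z)^{\nu}$) the ratio $(\xi\circ\chi^*)/(\xi\circ\chi^{**})$ is holomorphic and nonzero at $x_0$. Iterating the resulting identities $\xi(\chi^\bullet(x))^{N_k}=\xi(\chi^\bullet(f^k(x)))$ along the eventually periodic, super-attracting orbit of $x_0$ (so $N_k\to\infty$), and using $g=const\cdot|d\xi/\xi|$ near $P_R$ so that $d$ bounds $|\log|$ of the corresponding ratios at $f^k(x)$, one gets $|(\xi\circ\chi^*)/(\xi\circ\chi^{**})|\le e^{O(d)/N_k}$ for all $k$; hence this ratio is a unimodular constant $\omega$, and $\omega^{N_k}=e^{O(d)}$ forces $\omega$ (for $d$ small) to be one of finitely many roots of unity. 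If some such $\omega\ne1$, then $x\mapsto d_g(\chi^*(x),\chi^{**}(x))$ equals the positive constant $const\cdot|\arg\omega|$ near $x_0$, already bounding $d$ below uniformly; so we may assume $\chi^*=\chi^{**}$ on a neighbourhood of $P_f$.

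Next I would propagate this agreement. Writing $\chi^\bullet$ as a branch of $R^{-1}\circ\chi^\bullet\circ f$ and using the natural complex structure on the interior of $X$ (on which $\pi$ is holomorphic and $f$ corresponds to $Q$), one spreads the equality $\chi^*=\chi^{**}$ — together with joint holomorphy — backwards, obtaining $\chi^*=\chi^{**}$ on $\pi^{-1}(B)$, where $B$ is the (fixed) union of the super-attracting basins of $Q$; this contains $\bigcup_n f^{-n}(P_f)$, because the finite postcritical set $P$ lies in $B$ (its periodic points are super-attracting cycles of $Q$ and its other points map to them). Consequently $\overline S\subseteq X-\pi^{-1}(B)$ is disjoint from a fixed open neighbourhood of the finite set $f^{-1}(P_f)$, hence $\overline S$ is at a definite distance from $f^{-1}(P_f)$. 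Using $f^{-1}(P_f)=(\chi^\bullet)^{-1}(R^{-1}(P_R))$ and continuity it follows that $\chi^*(\overline S)$, $\chi^{**}(\overline S)$ and $\chi^*(f(\overline S))$, $\chi^{**}(f(\overline S))$ all lie in a fixed compact subset of $\tilde Y$ that avoids fixed neighbourhoods of $R^{-1}(P_R)$ and of $P_R$.

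Now the expansion step. The function $\Delta(x)=d_g(\chi^*(x),\chi^{**}(x))$ extends continuously to $X$ (by $0$ across $f^{-1}(P_f)$, and across $P_f$ by the first paragraph), so it attains its maximum — equal to $d$ — at some $x_1$; since $d>0$, $x_1\in S$, so $x_1$ and $f(x_1)$ are in the good region. Lift through the covering $R\colon\tilde Y\to Y$ a length-minimising $Y$-geodesic from $\chi^*(f(x_1))$ to $\chi^{**}(f(x_1))$, of length $\Delta(f(x_1))\le d$, starting at $\chi^*(x_1)$: the lift has length $\le d/E$ and ends at $q\in R^{-1}(\chi^{**}(f(x_1)))$. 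If $q=\chi^{**}(x_1)$ then $d=\Delta(x_1)\le d/E$, impossible; so $q\ne\chi^{**}(x_1)$, and $q$, $\chi^{**}(x_1)$ are distinct $R$-preimages of a point of the fixed compact set, while both (as $q$ is within $d/E$ of $\chi^*(x_1)$) stay in the fixed compact subset of $\tilde Y$. Since $R\colon\tilde Y\to Y$ is an unramified covering, distinct preimages over such a compact set are separated by a fixed $\eps_1>0$; hence $\eps_1\le d_g(q,\chi^{**}(x_1))\le d_g(\chi^*(x_1),q)+\Delta(x_1)\le(1+E^{-1})d$, so $d\ge\eps_1 E/(E+1)>0$. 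Taking $\eps$ below this value and below the constant of the first paragraph gives the lemma.

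I expect the first two steps to be the real work; the obstacle is that the naive claim ``distinct lifts are uniformly separated'' genuinely fails at $P_R$ and at the ramification points of $R$ lying in $R^{-1}(P_R)-P_R$, so before the expansion of $R$ can be exploited one must use holomorphy — B\"ottcher coordinates near $P_f$, the identity theorem farther out — to pin $\chi^*$ and $\chi^{**}$ together on a fixed neighbourhood of this entire ramification locus, and one must check that the separation constant $\eps_1$ can be chosen independently of the particular pair $(\chi^*,\chi^{**})$.
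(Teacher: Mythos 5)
Your opening two steps track the paper's own argument closely: near $P_f$ both fixed maps are holomorphic lifts of the same B\"ottcher data, so their ratio in a B\"ottcher coordinate is a locally constant root of unity of bounded degree, which is either $1$ or uniformly far from $1$; then the equality spreads along the basin via the functional equation and the holomorphy of $\chi$ on $\bigcup_n f^{-n}(O)$. Where you diverge is in the final step. The paper introduces the pulled-back Green function $G_f$ and observes, after the inductive propagation, that the set $\{G_f>0\}$ is dense in $X$, so $\chi^*=\chi^{**}$ everywhere by continuity; no expansion argument over the ``Julia part'' of $X$ is ever made. You instead try to close the proof by a max-of-$\Delta$ lifting argument on $\overline S\subseteq X-\pi^{-1}(B)$, and this is where the proposal has a genuine gap. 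The separation constant $\eps_1$ you invoke is not a priori uniform: the compact set $\chi^{**}\!\left(X-\pi^{-1}(B)\right)$ avoids $P_R\cup R^{-1}(P_R)$, but how far it stays from them depends on the particular map $\chi^{**}$ (the definition of $\Cc$ controls preimages of $P_R$ and $R^{-1}(P_R)$ pointwise, not the modulus of continuity), so the inequality $d\ge\eps_1 E/(E+1)$ only gives $d>0$ for each pair, not a lower bound independent of the pair. You flag this yourself at the end, but flagging it does not close it, and closing it amounts to re-proving the lemma. In short: the first part of your argument is the paper's argument, but the last step replaces the paper's one-line density observation with a strictly harder expansion argument that, as written, does not yield the uniform $\eps$ the lemma asserts.
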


\begin{proof}
There exists a neighborhood $O$ of $P_f$ such that every map $\chi\in\Cc$
such that $R\circ\chi=\chi\circ f$ is holomorphic on $O$.
Indeed, $\chi$ is holomorphic on at least some small neighborhood of $P_f$
but the formula $R\circ\chi=\chi\circ f$ says that $\chi$ is also holomorphic
on iterated pullbacks of this neighborhood under $f$. 

 Let $\xi$ be an extended B\"ottcher coordinate on a neighborhood $W$ of $P_R$ so that 
$\xi\circ R(z)=\xi(z)^{\nu(z)}$ for some locally constant function $\nu$
taking positive integer values.
Define the Green function $G_R$ of $R$ on $W$ as $-\log|\xi|$.
The function $G_R\circ\chi$ restricted to some neighborhood of $P_f$ is the same
for all maps $\chi\in\Cc$ with the property $R\circ\chi=\chi\circ f$
(by the uniqueness of B\"ottcher's coordinate).
Denote this function by $G_f$ and call it the Green function of $f$.
The Green function $G_R$ can be extended to $\CP^1$ by setting 
$$
G_R(z)=\frac 1{d^n}G_R(R^{\circ n}(z))
$$
if $R^{\circ n}(z)\in W$ for some $n\ge 0$, and $G_R(z)=0$ otherwise.
Here $d=\deg(R)$.
Similarly, $G_f$ extends to a function on $X$.

Take a sufficiently small number $\eps_0>0$.
The set $G_f\le\eps_0$ is mapped to the set $G_R\le\eps_0$ by both $\chi^*$ and $\chi^{**}$.
Note that $\xi\circ\chi^*$ and $\xi\circ\chi^{**}$ can only differ on
the set $G_f\le\eps_0$ by a locally constant factor that 
is a root of unity of bounded degree.
Therefore, the distance between restrictions of $\xi\circ\chi^*$ and $\xi\circ\chi^{**}$
to the set $G_f\le\eps_0$ cannot take arbitrarily small nonzero values.
It follows that if $\chi^*$ and $\chi^{**}$ are sufficiently close, then their 
restrictions to the set $G_f\le\eps_0$ must coincide.

Since all critical values of $R$ are poles of the Green function $G_R$,
the distance between two different $R$-preimages of any point $z$ with
$G_R(z)\ge\eps_0/d$ is bounded below by some positive number uniform with respect to $z$.
It follows that $\chi^*$ and $\chi^{**}$ must also coincide on the set 
$\eps_0\le G_f\le d\eps_0$ provided that $\chi^*$ and $\chi^{**}$ are sufficiently close.
By induction, the two maps coincide on the set $G_f>0$.
This set is dense in $X$ (because it is dense in $\CP^1-Z$), hence $\chi^*=\chi^{**}$.
\end{proof}

\begin{proof}[Proof of Theorem \ref{T:conv}]
 Throughout the proof, the parameters $t$ and $s$ will run through the interval $[0,1]$.
By a homotopy, we will always mean a homotopy between two paths in the metric space
$\Cc$ with fixed endpoints.
  
Consider any rectifiable path $\gamma:[0,1]\to\Cc$ connecting $\chi_{n-1}$
  with $\chi_{n}$ and homotopic to the path $t\mapsto \chi_{n-1+t}$.
  Consider the lift $\tilde\gamma:[0,1]\to\Cc$ of $\gamma$ (as in Lemma \ref{L:lifting})
  such that $\tilde\gamma(0)=\tilde\chi_n$.
  The path $\tilde\gamma$ is at least $E$ times shorter than $\gamma$.
  We claim that $\tilde\gamma(1)=\tilde\chi_{n+1}$.
  Indeed, since the path $\gamma$ is homotopic to the path $t\mapsto \chi_{n-1+t}$,
  the path $\tilde\gamma$ is homotopic to the path
  $t\mapsto\tilde\chi_{n+t}$ (the lifts of two homotopic paths are homotopic).

  Set $L_{n}$ to be the infimum of the lengths of rectifiable paths in $\Cc$ 
  lying in $\Cc(O)$ for some neighborhood $O$ of $P_f$, connecting
  $\chi_{n}$ to $\chi_{n+1}$ and homotopic to the path $t\mapsto \chi_{n+t}$.
  If $\eps_n$ denotes the maximum of the lengths of the paths
  $\Gamma_{n}:s\mapsto \Gamma(n,s)$ and $\Gamma_{n+1}:s\mapsto\Gamma(n+1,s)$, then we have
  $$
  L_{n}\le E^{-1}\cdot L_{n-1}+2\eps_{n}
  $$
  Indeed, if $\gamma$ is a rectifiable path in $\Cc$ that connects $\chi_{n-1}$
  with $\chi_{n}$, then $L_{n}$ is at most the length of the composition of
  the following paths:
  \begin{itemize}
    \item the path $\Gamma_{n}$ from $\chi_{n}$ to
    $\tilde\chi_{n}$;
    \item the path $\tilde\gamma$ from $\tilde\chi_{n}$
    to $\tilde\chi_{n+1}$;
    \item the reversed path $\Gamma_{n+1}$ from $\tilde\chi_{n+1}$
    to $\chi_{n+1}$.
  \end{itemize}
  The length of this composition can be made smaller than any fixed
  number exceeding $E^{-1}L_{n-1}+2\eps_{n}$ by choosing the path $\gamma$
in $\Cc(O)$ for some $O$
  to have length close to $L_{n-1}$.
  Note also that the composition is homotopic to $t\mapsto\chi_{n+t}$
  provided that $\gamma$ is homotopic to $t\mapsto\chi_{n-1+t}$.
  The corresponding homotopy can be easily constructed using the homotopy $\Gamma$.

  Take any $n_0$, then, applying the previous inequality several times,
  we obtain
  $$
  L_n\le E^{n_0-n}L_{n_0}+2\tilde\eps_{n_0}(q^{n-n_0}+q^{n-2}+\dots+1),
  $$
  where $\tilde\eps_{n_0}$ is the supremum of $\eps_{n_0+1}$, $\dots$.
  The second term in the right-hand side can be made arbitrarily
  small (uniformly with $n$)
  by choosing $n_0$ large enough.
  After $n_0$ has been chosen, we can choose sufficiently large $n$
  to make the first term as small as we wish.
  It follows that $L_n\to 0$ (in particular, the distance
  between $\chi_{n}$ and $\chi_{n+1}$ tends to 0 in $\Cc$).

  Consider the composition $\delta_n$ of some path $\gamma$ of length at most $2L_n$ homotopic
  to $t\mapsto\chi_{n+t}$ and the path $\Gamma_{n+1}$.
  Reparameterize $\delta_n$ so that the parameter runs from 0 to 1.
We can arrange that $\delta_n(t)\in\Cc(O)$ for some open neighborhood $O$ of $P_f$
and all $t\in[0,1]$.
  We have $R\circ\delta_n(1)=\delta_n(0)\circ f$ because
  $\delta_n(0)=\chi_{n}$ and $\delta_n(1)=\tilde\chi_{n+1}$.

  Consider the extended path $\delta_{n}:[0,\infty]\to\Cc(O)$
  as in Lemma \ref{L:ext}.
  Then we have
  $$
  R\circ\delta_n(\infty)=\delta_n(\infty)\circ f.
  $$
  The distance between $\delta_{n}(\infty)$ and $\delta_{m}(\infty)$ tends
  to 0 as $n$ and $m\to\infty$.
  By Lemma \ref{L:not-too-close},
  the sequence $\delta_{n}(\infty)$ stabilizes, i.e. $\delta_{n}(\infty)$
  is the same map $\chi_\infty$ for all sufficiently large $n$.
  We know that the distance
  between $\chi_n$ and $\chi_\infty$ in $\Cc$ tends to $0$.
  Therefore, $\chi_n$ converge to $\chi_\infty$.
\end{proof}

\end{document}